\newcommand\MTkillspecial[1]{% helper macro
  \bgroup
  \catcode`\&=9
  \let\\\relax%
  \scantokens{#1}%
  \egroup
}
\newcommand\DeclarePairedDelimiterMultiline[3]{
  \DeclarePairedDelimiter{#1}{#2}{#3}
  \reDeclarePairedDelimiterInnerWrapper{#1}{star}{
  \mathopen{##1\vphantom{\MTkillspecial{##2}}\kern-\nulldelimiterspace\right.}
  ##2
  \mathclose{\left.\kern-\nulldelimiterspace\vphantom{\MTkillspecial{##2}}##3}}
}
\title{Bounds for the random walk speed \\ in terms of the \teichmuller distance}
\author{Aitor Azemar\thanks{ Aitor.Azemar@glasgow.ac.uk, Mathematics \& Statistics, University of Glasgow G128QQ, UK}}
\date{}
\newtheorem{theorem}{Theorem}[section]
\newtheorem{lemma}[theorem]{Lemma}
\newtheorem{corollary}[theorem]{Corollary}
\newtheorem{proposition}[theorem]{Proposition}
\newtheorem{question}[theorem]{Question}
\DeclareMathOperator{\thick}{Thick}
\DeclareMathOperator{\teich}{Teich}
\DeclareMathOperator{\Isom}{Isom}
\newcommand{\longestcurv}{M}
\newcommand{\teichmuller}{Teichmüller }
\newcommand{\tray}[2]{{\mathcal{R}(#1;#2)}}
\newcommand{\R}{\mathbb{R}}
\newcommand{\N}{\mathbb{N}}
\newcommand{\T}{\mathcal{T}}
\newcommand{\C}{\mathbb{C}}
\DeclareMathOperator{\Drift}{Drift}
\DeclarePairedDelimiterMultiline{\abs}{\lvert}{\rvert}
\DeclarePairedDelimiterMultiline{\pare}{(}{)}
\DeclarePairedDelimiterMultiline{\norm}{\lVert}{\rVert}
\renewcommand{\H}{\mathbb{H}}
\renewcommand{\epsilon}{\varepsilon}
\DeclareMathOperator{\supp}{supp}
\newcommand{\hyp}{\text{Hyp}}
\numberwithin{equation}{section}
\begin{document}
 \maketitle

\begin{abstract}
Consider a closed surface $S$ with negative Euler characteristic, and an admissible probability measure on the fundamental group of $S$ with finite first moment with respect to some hyperbolic metric on $S$. Corresponding to each point in Teichmüller space there is an associated random walk on the hyperbolic plane. Azemar--Gadre--Gouëzel--Haettel--Lessa--Uyanik prove that the drift of this random walk is a proper function on Teichmüller space, and that this drift grows at least linearly with respect to the \teichmuller distance. In this paper we refine the result. On the one hand, by considering Jenkins-Strebel directions we show that the linear lower bound is sharp. On the other hand, we show that for Lebesgue typical \teichmuller geodesics, the drift grows exponentially. We also exhibit \teichmuller geodesics for which the growth oscillates between almost linear and exponential. Furthermore, we show that the drift is a quasiconvex function up to a multiplicative constant.
\end{abstract}

\section{Introduction}
Let $S$ be a closed oriented surface with negative Euler characteristic and let \(p\in S\) be a basepoint. Furthermore, let $\Gamma=\pi_1(S,p)$ and let $\mu$ be a probability measure on $\Gamma$ that is admissible, i.e., the semigroup generated by the support of $\mu$ is equal to $\Gamma$. For each hyperbolic metric $\rho$ on $S$ we can define the length of an element $g\in \Gamma$ as the minimal $\rho$-length of the free loops within the homotopy class of $g$, which we denote $|g|_\rho$. Assume then that $\mu$ has finite first moment with respect to $\rho$ (and hence, with respect to any other hyperbolic metric $\rho',$ see \cref{se:background} for details). Consider a random walk $Z^\mu_n=g_1\dotsm g_n$ 
where $g_i$ are i.i.d. elements of $\Gamma$ with distribution $\mu$. The \emph{drift (or speed)} of the random walk on $\Gamma$ driven by $\mu$ with respect to $\rho$ is defined as 
\[
\Drift_\mu(\rho) \coloneqq \lim_{n\to\infty}\frac{\abs{Z_n^\mu}_{\rho}}{n}
\]
The limit above exists and has constant value almost surely. That is, the drift is a well-defined property of the random walk, depending only on $\Gamma$, $\mu$ and $\rho$.

Denote as $\T(S)$ the \teichmuller space of $S$. It has been proven by Azemar--Gadre--Gouëzel--Haettel--Lessa--Uyanik \cite[Theorem B]{DriftProper} that the function $\Drift:\T(S)\to \R_+$ grows at least linearly with respect to the \teichmuller distance. In this paper we give more details about this relation. 

There is a natural identification between the set $Q^1(o)$ of unit area quadratic differentials $q$ under the conformal structure defined by a basepoint $o\in \T(S)$ and the set of \teichmuller geodesic rays starting at $o$. For a given $q\in Q^1(o)$ we shall denote by $\tray{q}{\cdot}:[0,\infty)\to \T(S)$ the associated geodesic. We find that whenever the quadratic differential is Jenkins--Strebel the linear lower bound of the growth is sharp. More precisely, we show the following.
\begin{theorem}\label{th:linearalongJenkin}
    Let $\mu$ be an admissible measure on $\pi_1(S,p)$ with finite first moment. Furthermore, let $o\in \T(S)$ be a basepoint, and let $q\in Q^1(o)$ be Jenkins--Strebel. Then, there are constants $c(o),C(q,o)>0$ such that, along the geodesic ray $\tray{q}{\cdot}$,
    \[
    c(o)t\le \Drift_\mu(\tray{q}{t})\le C(q,o) t
    \]
    for all $t\ge 1$.
\end{theorem}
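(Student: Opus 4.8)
The plan is to prove the two inequalities separately. The lower bound is almost immediate: since $\tray{q}{\cdot}$ is a unit-speed \teichmuller geodesic, $\dist_{\teich}(o,\tray{q}{t})=t$, so \cite[Theorem B]{DriftProper} yields $\Drift_\mu(\tray{q}{t})\ge at-b$ for constants $a,b>0$ depending only on $\mu$ and $o$; this exceeds $\tfrac a2 t$ once $t$ is large, and on the remaining compact parameter range $t\in[1,t_0]$ the drift is bounded below by a positive constant (it is everywhere positive, $\mu$ being admissible, and continuous on $\teich(S)$). Hence a single $c(o)>0$ works for all $t\ge 1$.

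For the upper bound, write $\rho_t$ for the hyperbolic metric underlying $\tray{q}{t}$ and $\rho=\rho_0$. I will reduce everything to the deterministic length comparison
\[
|g|_{\rho_t}\ \le\ C_1(q,o)\, t\,\bigl(|g|_{\rho}+1\bigr)\qquad\text{for all }g\in\Gamma\text{ and all }t\ge 1.
\]
Granting this, applying it with $g=Z^\mu_n$ along a sample path, dividing by $n$ and letting $n\to\infty$ gives $\Drift_\mu(\tray{q}{t})\le C_1(q,o)\,t\,\Drift_\mu(\rho)$; since $\Drift_\mu(\rho)<\infty$ by the first-moment hypothesis, we may take $C(q,o)=C_1(q,o)\,\Drift_\mu(\rho)$.

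The comparison rests on the geometry of a Jenkins--Strebel ray. Let $\gamma=\gamma_1\cup\dots\cup\gamma_k$ be the multicurve formed by the core curves of the horizontal cylinders of $q$. By the classical description of such rays (Masur), as $t\to\infty$ the marked surfaces $\tray{q}{t}$ converge in the Deligne--Mumford compactification to the noded surface obtained by pinching $\gamma$; consequently, for $t$ large, the only curves of $\rho_t$-length below the Margulis constant $\epsilon_0$ are the $\gamma_i$, and each complementary piece of $\rho_t$ converges smoothly to a fixed finite-area hyperbolic surface. I extract two quantitative facts. First, Wolpert's inequality gives $\ell_{\rho_t}(\gamma_i)\ge e^{-2t}\ell_{\rho}(\gamma_i)\ge e^{-2t}\operatorname{sys}(\rho)$, so the $\rho_t$-collar of $\gamma_i$, having width $\asymp\log\bigl(1/\ell_{\rho_t}(\gamma_i)\bigr)$, has width $\le C(q,o)\,t$ once $t\ge 1$. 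Second, by a packing argument each component of the $\epsilon_0$-thick part of $\rho_t$ has diameter at most a constant $D_0=D_0(\chi(S))$ and injectivity radius at least $\epsilon_0/2$, uniformly in $t$.

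Given this, the comparison follows by surgery on geodesics. Let $\alpha$ be the $\rho$-geodesic representative of $g$; it meets $\gamma$ in $N:=i(g,\gamma)$ points and is thereby cut into arcs $a_1,\dots,a_N$, each lying in a complementary piece, with $\sum_m\ell_{\rho}(a_m)=|g|_{\rho}$. As the $\rho$-collars of the $\gamma_i$ are disjoint and of fixed positive width, $N\le|g|_{\rho}/(2w_0)$ for some $w_0=w_0(q,o)>0$. Now build a loop $\beta$ in $\tray{q}{t}$ freely homotopic to $g$ following the same combinatorial pattern: across each of the $N$ crossings of $\gamma$ take a geodesic arc straight through the corresponding $\rho_t$-collar (length $\le C(q,o)\,t$), across each complementary piece take a path in the thick part realizing the homotopy class of $a_m$ (of length $\le C_2(q,o)\,\ell_{\rho}(a_m)+D_0$, since that class has complexity $O(\ell_{\rho}(a_m))$ and the thick part has bounded geometry uniformly in $t$), and bridge consecutive arcs along the short geodesics $\gamma_i$ when needed (cost $\le\epsilon_0$ each). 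Summing and using $N\le|g|_{\rho}/(2w_0)$,
\[
|g|_{\rho_t}\ \le\ \ell_{\rho_t}(\beta)\ \le\ N\,C(q,o)\,t\;+\;C_2(q,o)\,|g|_{\rho}\;+\;N\,(D_0+\epsilon_0)\ \le\ C_1(q,o)\,t\,\bigl(|g|_{\rho}+1\bigr)
\]
for $t\ge 1$; the degenerate case $N=0$, in which $g$ is carried by a single converging piece, gives a bound uniform in $t$ by the same estimate. I expect the geometric input of the third paragraph to be the main obstacle — that the degeneration along a Jenkins--Strebel ray consists exactly of pinching the fixed multicurve $\gamma$, at rate $\ell_{\rho_t}(\gamma_i)\asymp e^{-2t}$, with every complementary piece converging in moduli — whereas the packing estimate, the surgery, and the passage from the deterministic bound to the drift are routine.
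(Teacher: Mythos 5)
Your lower bound is the same as the paper's (it simply quotes the linear properness result, \cref{pr:lowerboundlinear}), but your upper bound takes a genuinely different route. The paper's proof is a three-line combination of two ingredients it has already set up: \cref{scalingconstantboththeorem}, which says $\Drift_\mu(\rho)\asymp M_\rho^F$ for a fixed finite filling set $F$, and Masur's asymptotic formula (\cref{th:MasurStrebelBound}), which gives $\hyp_{\tray{q}{t}}(\gamma_j)\sim 4t\sum_i i(\alpha_i,\gamma_j)$ for each of the finitely many $\gamma_j\in F$; together these immediately yield $\Drift_\mu(\tray{q}{t})\le C(q)t$. You instead prove a uniform word-by-word comparison $|g|_{\rho_t}\le C_1 t(|g|_\rho+1)$ by a thick--thin surgery argument and then pass to the drift by hand. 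Your passage from the deterministic inequality to the drift is exactly how the paper proves the upper half of \cref{scalingconstantboththeorem}, and your collar-counting bound $N\lesssim |g|_\rho$ mirrors \cref{le:upperboundlength}, so structurally you are re-deriving that reduction rather than bypassing it; what you gain is independence from Masur's quantitative formula, and what you lose is brevity.

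The honest cost of your route is concentrated where you say it is: the claim that along a Jenkins--Strebel ray only the core curves degenerate, that each complementary piece converges in moduli, and that the thick parts therefore have geometry bounded uniformly in $t$. This is true (it is the standard plumbing/augmented-Teichmüller-space picture for Strebel rays, and is essentially contained in Masur's work that the paper cites), but it is not cheaper than the input the paper uses, so your argument as written has a genuine unfinished step rather than a shortcut. A second, smaller point needs care: your estimate that the homotopy class of each arc $a_m$ can be realized in the $\rho_t$-thick part with length $\le C_2\ell_\rho(a_m)+D_0$ must also account for twisting of $a_m$ about the boundary curves $\gamma_i$ of the complementary piece; this is salvageable (the twisting number is $O(\ell_\rho(a_m))$ and each twist costs at most $\ell_{\rho_t}(\gamma_i)\le\epsilon_0$), but it is not covered by a bounded-geometry/complexity count alone. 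With those two points supplied, your proof would be correct; as it stands I would call it a viable but substantially longer alternative to the paper's argument.
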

Furthermore, we find that this is not the typical behaviour. As long as the measure on the set of directions satisfies a thickness property, we get that along typical geodesics the drift grows exponentially. For example, we find the following.
\begin{theorem}\label{th:exponentialGrowthalmosteverywhere}
     Let $\mu$ be an admissible measure on $\pi_1(S,p)$ with finite first moment. Furthermore, let $o\in \T(S)$ be a basepoint, $\theta<1$ and $\lambda$ be the Lebesgue measure on $Q^1(o)$. Then, for $\lambda$-almost all directions $q\in Q^1(o)$ we have 
    \[
     e^{\theta t}< \Drift_\mu(\tray{q}{t})<  e^{\frac{1}{\theta}t},
    \]
    for any $t>t(q)$, where $t(q)$ is some finite time depending on $q$.
\end{theorem}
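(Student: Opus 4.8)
The plan is to establish the sharper statement that, for $\lambda$-almost every $q$,
\[
\log\Drift_\mu(\tray{q}{t})=t+o(t)\qquad\text{as }t\to\infty;
\]
since $\theta<1<1/\theta$, both displayed inequalities then hold once $t$ exceeds a threshold $t(q)$ absorbing the $o(t)$ term and all constants below. The first ingredient is a comparison between the hyperbolic length $\abs{g}_{\tray{q}{t}}$ and the flat length of $g$ in the renormalized differential $g_t\cdot q$. Writing $\mathcal F_v=\mathcal F_v(q)$ and $\mathcal F_h=\mathcal F_h(q)$ for the vertical and horizontal foliations of $q$, the flat length of $g\in\pi_1(S,p)$ in $g_t\cdot q$ equals $e^{t}\,i(g,\mathcal F_v)+e^{-t}\,i(g,\mathcal F_h)$ up to a universal multiplicative constant; and whenever $g_t\cdot q$ lies in a fixed compact set $\mathcal K$ of the bundle of unit-area quadratic differentials, this flat length is in turn comparable to $\abs{g}_{\tray{q}{t}}$ with constants depending only on $\mathcal K$. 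Set $A_v(q)\coloneqq\lim_{N}\tfrac1N i(Z^\mu_N,\mathcal F_v)$ and $A_h(q)\coloneqq\lim_{N}\tfrac1N i(Z^\mu_N,\mathcal F_h)$; by Kingman's subadditive ergodic theorem (intersection number with a fixed foliation is subadditive along the walk up to a bounded base-point error) together with finiteness of the first moment (intersection number with a fixed foliation is Lipschitz for any hyperbolic length), these limits exist, are almost surely constant, and are finite. Dividing the comparison by $N$ and letting $N\to\infty$ gives, at every time $t$ for which $g_t\cdot q\in\mathcal K$,
\[
\Drift_\mu(\tray{q}{t})\ \asymp\ e^{t}A_v(q)+e^{-t}A_h(q),
\]
with implicit constants depending only on $\mathcal K$.

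The decisive point is that $A_v(q)>0$ for $\lambda$-almost every $q$, and I would deduce this from the properness of $\Drift$ on $\T(S)$ proved in \cite{DriftProper}. For $\lambda$-almost every $q$ both $\mathcal F_v$ and $\mathcal F_h$ are uniquely ergodic, so by Masur's criterion the ray $\tray{q}{\cdot}$ returns infinitely often to a fixed compact set $\mathcal K$, along times $t_n\to\infty$. First, $A_v(q)+A_h(q)>0$: the comparison gives $\Drift_\mu(\tray{q}{t_n})\gtrsim e^{t_n}A_v(q)+e^{-t_n}A_h(q)$, and the left-hand side is positive because an admissible random walk on a surface group has positive drift with respect to every hyperbolic metric (the orbit map is a quasi-isometry). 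Next, if $A_v(q)=0$ then the comparison yields $\Drift_\mu(\tray{q}{t_n})\asymp e^{-t_n}A_h(q)\to0$, whereas $d_{\teich}(o,\tray{q}{t_n})=t_n\to\infty$ forces $\Drift_\mu(\tray{q}{t_n})\to\infty$ by properness — a contradiction. Hence $A_v(q)>0$ whenever the forward ray of $q$ is recurrent, and in particular for $\lambda$-almost every $q$. (The symmetric argument applied to the backward ray gives $A_h(q)>0$ too, though this is not needed.)

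To conclude, I would upgrade recurrence at the times $t_n$ to control at all large $t$. A logarithm-law / non-divergence estimate for Teichmüller geodesics issued from a fixed basepoint gives, for $\lambda$-almost every $q$, that the excursions of $\tray{q}{\cdot}$ away from $\mathcal K$ have length $o(t)$, hence $t_{n+1}-t_n=o(t_n)$. At the times $t_n$ the comparison and $A_v(q)>0$ give $\log\Drift_\mu(\tray{q}{t_n})=t_n+O(1)$; and for $t\in[t_n,t_{n+1}]$, Wolpert's inequality (a $K$-quasiconformal map changes every hyperbolic length by a factor at most $K=e^{2\,d_{\teich}}$) gives $\bigl|\log\Drift_\mu(\tray{q}{t})-\log\Drift_\mu(\tray{q}{t_n})\bigr|\le 2(t-t_n)=o(t)$. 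Together these yield $\log\Drift_\mu(\tray{q}{t})=t+o(t)$, which is the claim. (In passing, running the same scheme at times where the ray visits $\mathcal K$ but with \emph{unbounded} intervening excursions — rather than $o(t)$ ones — is what will produce, in a later section, rays whose drift oscillates between almost linear and exponential growth, the almost-linear regime arising from the Jenkins--Strebel-type estimate behind \cref{th:linearalongJenkin}.) The main obstacle is the flat--hyperbolic comparison underlying the first paragraph: one must make its constants depend only on $\mathcal K$ and, more delicately, work with the correct $\mathcal K$ — the inequality $\abs{g}_{\tray{q}{t}}\lesssim e^{t}i(g,\mathcal F_v)+e^{-t}i(g,\mathcal F_h)$ requires the absence of short flat saddle connections, not merely a lower bound on the hyperbolic systole, so $\mathcal K$ must be a compact set of the bundle of unit-area quadratic differentials and one must check that $\lambda$-almost every ray from $o$ recurs to such a $\mathcal K$.
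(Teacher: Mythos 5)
Your argument is essentially correct but follows a genuinely different route from the paper. The paper first proves $\Drift_\mu(\rho)\asymp \max_{\gamma\in F}\hyp_\rho(\gamma)$ for a filling set $F$ (\cref{scalingconstantboththeorem}), converts this via Choi--Rafi's comparison into $\Drift_\mu(\rho)\asymp e^{d_{\teich}(\rho,o)}$ on each thick part (\cref{pr:uniformboundthickpart}), uses Lenzhen--Rafi quasiconvexity to propagate the bound from thick-part times to all times (\cref{le:driftgrowth}), and closes with the Dowdall--Duchin--Masur estimate that Lebesgue-typical rays spend proportion $\ge\xi$ of their time in $\T_{\epsilon(\xi)}(S)$. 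You instead compare hyperbolic length to flat length on a compact part of the quadratic differential bundle, extract almost-sure limits $A_v(q),A_h(q)$ of normalized intersection numbers via Kingman, prove $A_v(q)>0$ from properness of the drift, and interpolate between returns to the thick part with Wolpert's lemma. Your route bypasses Choi--Rafi and Lenzhen--Rafi entirely and yields more at return times, namely $\Drift_\mu(\tray{q}{t_n})\asymp e^{t_n}A_v(q)$ with a genuine $q$-dependent constant, which is relevant to the question raised in the paper's concluding section; the price is the flat--hyperbolic comparison machinery and the subadditive-ergodic setup, neither of which the paper needs.

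Two points need repair, though both are fixable with exactly the input the paper already cites. First, for a \emph{fixed} compact set $\mathcal K$ the claim that excursions away from $\mathcal K$ have length $o(t)$ is not justified: Dowdall--Duchin--Masur gives, for each $\delta>0$, \emph{some} thick part $\T_{\epsilon(\delta)}(S)$ outside of which the ray spends proportion at most $\delta$ of its time, hence gaps of length $O(\delta t)$ for that particular thick part --- not $o(t)$ gaps for a single $\mathcal K$, and Masur's logarithm law controls excursion depth rather than duration. Your proof survives by running the argument for each $\delta$ with $\mathcal K_\delta$ the (compact) preimage of $\T_{\epsilon(\delta)}(S)$ in the unit-area bundle: the comparison constants become $O_\delta(1)$, the Wolpert interpolation contributes $O(\delta t)$, and since $A_v(q)$ is defined independently of $\mathcal K$, letting $\delta\to 0$ still gives $\log\Drift_\mu(\tray{q}{t})=t+o(t)$. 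Second, Masur's criterion is stated in the wrong direction: unique ergodicity of the vertical foliation does \emph{not} imply recurrence of the ray (there exist uniquely ergodic divergent directions). Recurrence of $\lambda$-almost every ray to a fixed compact set should instead be deduced from the same Dowdall--Duchin--Masur statement (positive proportion of time in a thick part forces infinitely many returns), after which your positivity argument for $A_v(q)$ via \cref{pr:lowerboundlinear} goes through unchanged.
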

See \cref{th:exponentialgrowthprecise} for a more general version of the previous result, which also applies to harmonic measures on $Q^1(o)$ arising from random walks on the mapping class group with finite first moments. We also find that these two behaviours are not the only ones, and in fact there are many intermediate behaviours. Precisely, we find the following.
\begin{theorem}\label{th:undefinedGrowth}
Let $o\in \T(S)$ be a fixed basepoint. There is a constant $c(o)>0$ such that, for any increasing diverging function $f:\R_+\to \R_+$ there is some $q\in Q^1(o)$ and diverging increasing sequences $(t_n)$, $(s_n)$ such that
\[
\Drift_\mu(\tray{q}{t_n})>c(o)e^{t_n}
\]
and
\[
\Drift_\mu(\tray{q}{s_n})< f(s_n)s_n.
\]
\end{theorem}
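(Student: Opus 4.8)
The plan is to build $q$ as the unique point of a nested sequence of shrinking closed balls $\overline B_0\supset\overline B_1\supset\cdots$ in $Q^1(o)$, adding one constraint at each step, alternating between the two required types, so that every $q\in\overline B_k$ satisfies all constraints imposed so far. Two facts make this work. First, $\Drift_\mu$ is continuous on $\T(S)$ (the Fuchsian holonomy, and with it the escape rate of the induced random walk on $\H$, vary continuously with the point of $\T(S)$), and $q\mapsto\tray{q}{t}$ is continuous jointly in $(q,t)$; hence for fixed $u$ and $a<b$ the sets $\{q:\Drift_\mu(\tray{q}{u})>a\}$ and $\{q:\Drift_\mu(\tray{q}{u})<b\}$ are open. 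Second, since Teichmüller-close hyperbolic metrics have uniformly comparable length functions, $\log\Drift_\mu$ is coarsely Lipschitz for the Teichmüller metric; together with the linear lower bound of \cite{DriftProper} this shows the two target behaviours are not in conflict, since along a ray $\log\Drift_\mu(\tray{q}{t})$ changes at bounded coarse rate and is confined between $\log t+O(1)$ and $O(t)$, leaving room to oscillate between the near-linear and the near-maximal-exponential regimes.

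\textbf{The two building blocks.} \emph{(i) Near-linear.} Jenkins--Strebel directions are dense in $Q^1(o)$, since multicurves are dense among measured foliations and the horizontal foliation gives a homeomorphism $Q^1(o)\cong\mathcal{MF}$. Given any open $U\subset Q^1(o)$, pick a Jenkins--Strebel $q'\in U$; by \cref{th:linearalongJenkin} there is $C=C(q',o)$ with $\Drift_\mu(\tray{q'}{t})\le Ct$ for all $t\ge 1$, so $\Drift_\mu(\tray{q'}{s})<f(s)s$ for every $s>0$ with $f(s)>C$; since $f$ diverges such $s$ exist and can be taken arbitrarily large. \emph{(ii) Near-maximal exponential.} We claim there is $c(o)>0$ such that every open $U\subset Q^1(o)$ contains a direction $q$ with $\Drift_\mu(\tray{q}{t})>c(o)e^{t}$ for arbitrarily large $t$. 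Given $q_0\in U$, choose a pseudo-Anosov $\phi$ whose unstable foliation, scaled appropriately, is close to the vertical foliation of $q_0$ (possible because stable/unstable foliations of pseudo-Anosovs are dense in $\mathcal{PMF}$), and let $q\in Q^1(o)$ be the direction with that vertical foliation, so $q$ is close to $q_0$. Its vertical foliation is uniquely ergodic, so $\tray{q}{\cdot}$ is asymptotic to the axis of $\phi$, which is cocompact modulo $\langle\phi\rangle$; along that axis $\Drift_\mu(\phi^{m}\cdot p)=\Drift_{(\phi_*^{-m})_*\mu}(p)$ is comparable to the first moment of the stretched measure — the images of $\supp\mu$ under $\phi_*^{-m}$ are aligned and have length of order $e^{m\tau(\phi)}$ — so $\Drift_\mu(\tray{q}{t})$ is comparable to $e^{t}$ for $t$ large, and taking $m$ large produces the required $t$.

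\textbf{Assembly.} Set $\overline B_0=Q^1(o)$. Inductively, given $\overline B_{k-1}$ and an increasing finite list of times, at an odd step apply building block (ii) inside $\operatorname{int}\overline B_{k-1}$ to produce $q^\ast$ and a time $u_k$ exceeding all previous times with $\Drift_\mu(\tray{q^\ast}{u_k})>c(o)e^{u_k}$; at an even step apply (i) to produce $q^\ast$ and such a $u_k$ with $\Drift_\mu(\tray{q^\ast}{u_k})<f(u_k)u_k$. In either case choose a closed ball $\overline B_k\ni q^\ast$ with $\overline B_k\subset\operatorname{int}\overline B_{k-1}$, of radius at most $2^{-k}$, small enough — possible since all constraints imposed through step $k$ are open and hold at $q^\ast$ — that they hold on all of $\overline B_k$. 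Then $q:=\bigcap_k\overline B_k$ is a single point with $\Drift_\mu(\tray{q}{u_k})>c(o)e^{u_k}$ for odd $k$ and $\Drift_\mu(\tray{q}{u_k})<f(u_k)u_k$ for even $k$; put $(t_n)=(u_{2n-1})$ and $(s_n)=(u_{2n})$.

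\textbf{Main obstacle.} Everything above is routine except building block (ii), and within it the \emph{uniformity} of $c(o)$. The difficulty is that an even step just before an odd step forces the ray arbitrarily deep into a thin part, so at the odd step $q$ must be very close to a Jenkins--Strebel direction; then $\phi$ is forced to have its invariant foliations close to a multicurve, its axis runs deep in a thin part, and $\tray{q}{\cdot}$ must make a long Teichmüller detour out of that thin part before it can shadow the axis. One must verify that the intersection numbers entering the first-moment estimate — which are large precisely because the foliation is nearly a multicurve near the pinched curve — exactly compensate this detour, so that the exponential gain is genuinely of the form $c(o)e^{t}$ with $c(o)$ independent of the step, rather than $c_k e^{u_k}$ with $c_k\to0$. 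Equivalently, one needs the set of directions along which $\log\Drift_\mu(\tray{q}{t})-t$ stays bounded below along some sequence of times to be dense in $Q^1(o)$ with a uniform bound; this can also be extracted from the proof of \cref{th:exponentialGrowthalmosteverywhere}, each recurrence of the geodesic to a fixed thick region supplying such a time. Pinning this down is the crux of the argument.
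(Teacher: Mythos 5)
Your assembly (nested closed balls cutting down open conditions, alternating two dense families of directions) is essentially the paper's induction, and your building block (i) is exactly the paper's combination of \cref{th:linearalongJenkin} with the density of Jenkins--Strebel directions. The gap is in building block (ii). As written, the claim that $\Drift_\mu(\phi^{m}\cdot p)=\Drift_{(\phi_*^{-m})_*\mu}(p)$ is ``comparable to the first moment of the stretched measure'' is not justified: the drift is bounded above by the first moment, but a matching lower bound is precisely what would need proof. More importantly, even granting that the axis of $\phi$ lies in a thick part, that thick part depends on $\phi$, so the constant you extract is a $c_k$ depending on the step --- which is exactly the non-uniformity you flag yourself in your final paragraph. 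You correctly guess the remedy (recurrence to a fixed thick region) but leave it as ``the crux to pin down,'' so the proof is incomplete at its one essential point.

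The resolution is simpler than your obstacle paragraph suggests and needs no pseudo-Anosov dynamics or intersection-number bookkeeping. Fix $\epsilon$ once and for all with $o\in \T_\epsilon(S)$. \cref{pr:uniformboundthickpart} gives $\Drift_\mu(\rho)\ge c\, e^{d_{\teich}(o,\rho)}$ for \emph{all} $\rho \in \T_\epsilon(S)$ with $c=c(\epsilon,o)$ independent of $\rho$, and directions recurrent to this fixed $\T_\epsilon(S)$ are dense in $Q^1(o)$. At an odd step, pick such a recurrent direction near the current centre and take $u_k$ to be a recurrence time to $\T_\epsilon(S)$; you may take it as large as you like (the paper takes $t_{k+1}>s_k+1$), so there is no tension with being close to the previous Jenkins--Strebel direction, since that closeness only constrains the ray at the finitely many times already fixed. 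The ``long detour out of the thin part'' is therefore a red herring: no compensation by intersection numbers is needed once the constant comes from a thick part chosen in advance. The only remaining care is that the limit point $q$ need not have $\tray{q}{u_k}$ exactly in $\T_\epsilon(S)$; the paper handles this by keeping $\tray{j_k}{t_i}$ within distance $2\delta$ of $\T_\epsilon(S)$ throughout the induction, so the limit lies in some fixed $\T_{\epsilon'}(S)$, and in your framework it is handled equivalently by imposing the open condition $\Drift_\mu(\tray{\cdot}{u_k})>\tfrac{c}{2}e^{u_k}$ on the balls and setting $c(o)=\tfrac{c}{2}$.
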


That is, there are directions along which the growth of the drift is exponential along one subsequence, while almost linear along another.

We also show that the quasiconvexity of hyperbolic lengths along \teichmuller geodesics proven by Lenzhen--Rafi \cite{LenzhenKasraConvexity} can be adapted to the drift.
\begin{corollary}\label{quasiconvexitycorollary}
     Let $\mu$ be an admissible measure on $\pi_1(S,p)$ with finite first moment. There exists a constant $K'>0$ such that for any \teichmuller geodesic $G$ and points $x,y,z\in \T(S)$ appearing in that order along $G$, we have
    \[
        \Drift_\mu(y)\le K' \max(\Drift_\mu(x),\Drift_\mu(z)).
    \]
\end{corollary}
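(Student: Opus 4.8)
The plan is to derive \cref{quasiconvexitycorollary} from the Lenzhen--Rafi quasiconvexity by writing the drift at each metric as an almost sure limit of hyperbolic lengths of closed geodesics, and then pushing the quasiconvexity inequality through that limit. Recall that for $g\in\Gamma$ the quantity $|g|_\rho$ equals $\ell_\rho(\gamma_g)$, the hyperbolic length of the closed geodesic $\gamma_g$ in the free homotopy class of $g$, a geodesic that need not be simple. The single ingredient I would isolate is a uniform quasiconvexity statement: there is a constant $K\ge 1$, depending only on the topology of $S$, such that for every $g\in\Gamma$ and every \teichmuller geodesic $G$ parametrized by arclength, the function $t\mapsto \ell_{G(t)}(\gamma_g)$ satisfies $\ell_{G(t_2)}(\gamma_g)\le K\max\bigl(\ell_{G(t_1)}(\gamma_g),\ell_{G(t_3)}(\gamma_g)\bigr)$ whenever $t_1\le t_2\le t_3$. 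When $g$ is represented by a simple closed curve this is exactly the theorem of Lenzhen--Rafi \cite{LenzhenKasraConvexity}.

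Granting this ingredient, the corollary follows quickly. Let $x,y,z$ appear in that order along a \teichmuller geodesic $G$ and write $x=G(t_1)$, $y=G(t_2)$, $z=G(t_3)$ with $t_1\le t_2\le t_3$. For every $n$, applying the $K$-quasiconvexity to the random element $g=Z_n^\mu$ and dividing by $n$ gives the (sure) inequality
\[
\tfrac1n\,|Z_n^\mu|_{y}\ \le\ K\,\max\!\bigl(\tfrac1n\,|Z_n^\mu|_{x},\ \tfrac1n\,|Z_n^\mu|_{z}\bigr).
\]
On the (full measure) intersection of the three almost sure events on which $\tfrac1n|Z_n^\mu|_{x}$, $\tfrac1n|Z_n^\mu|_{y}$ and $\tfrac1n|Z_n^\mu|_{z}$ converge to $\Drift_\mu(x)$, $\Drift_\mu(y)$ and $\Drift_\mu(z)$, letting $n\to\infty$ and using continuity of $\max$ yields $\Drift_\mu(y)\le K\max(\Drift_\mu(x),\Drift_\mu(z))$, so $K'=K$ works. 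Observe that this step uses nothing about $\mu$ beyond the existence of the limit defining the drift; the finite first moment hypothesis is needed only to ensure that $\Drift_\mu$ is finite in the first place.

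The real content, and the step I expect to be the main obstacle, is the uniform quasiconvexity of $t\mapsto\ell_{G(t)}(\gamma_g)$ for all $g$, including those whose geodesic representative is non-simple; equivalently, via the identity $\ell_\rho(\nu)=i(\nu,\mathcal L_\rho)$ with $\mathcal L_\rho$ the Liouville current, this is the quasiconvexity of the length of an arbitrary geodesic current along a \teichmuller geodesic, with constant independent of the current. This is what I understand ``adapting'' the Lenzhen--Rafi theorem to mean. I would try to obtain it by re-examining Rafi's combinatorial estimate for $\ell_{G(t)}$ along a \teichmuller geodesic, which, up to uniform multiplicative error, writes $\ell_{G(t)}$ of a curve as a sum of a flat-structure term, convex in $t$ and hence harmless, together with annular and subsurface contributions supported on the active intervals of the associated curves; Lenzhen--Rafi's analysis bounds each such contribution by the flat term at the endpoints of its active interval, a mechanism that is not specific to simple curves, so the hope is that the same argument, with extra bookkeeping for self-intersections, goes through with a constant that stays uniform as the self-intersection number of $\gamma_g$ grows. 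If a quasiconvexity statement for lengths of currents along \teichmuller geodesics is already available in the literature, one would invoke it directly instead. Finally, it is worth noting why a cruder route fails: one cannot hope to compare $\Drift_\mu$ two-sidedly with a supremum of hyperbolic lengths of simple closed curves, because along a Jenkins--Strebel ray the longest standard generator grows exponentially while, by \cref{th:linearalongJenkin}, the drift grows only linearly; this is precisely why one wants the limit argument above, which only ever invokes quasiconvexity of the length of a single (arbitrary) closed geodesic.
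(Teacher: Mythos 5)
Your argument has a genuine gap, and the route you dismiss at the end is in fact the paper's proof. The paper does compare $\Drift_\mu$ two-sidedly with the maximum hyperbolic length over a fixed finite filling set $F$: that is exactly \cref{scalingconstantboththeorem}, $\frac{1}{K}M_\rho^F\le\Drift_\mu(\rho)\le K M_\rho^F$. Given that, the corollary is immediate: apply \cref{quasiconvexityLenzhen} to each of the finitely many curves in $F$, take the maximum over $F$, swap the two maxima, and convert back to drifts, losing only the constants from \cref{scalingconstantboththeorem}. Your stated objection to this route --- that along a Jenkins--Strebel ray the longest fixed generator grows exponentially while the drift grows only linearly --- is false: by Masur's estimate (\cref{th:MasurStrebelBound}), the hyperbolic length of \emph{any} fixed closed curve $\gamma$ grows linearly along a Jenkins--Strebel ray, with rate governed by $\sum_i i(\alpha_i,\gamma)$; this is precisely how the paper proves \cref{th:linearalongJenkin}, and it is consistent with the two-sided comparison.

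The gap in your own route is the ingredient you yourself flag as the main obstacle: a quasiconvexity statement for $t\mapsto\hyp_{G(t)}(\gamma_g)$ with a constant uniform over \emph{all} $g\in\Gamma$, including elements whose geodesic representatives have arbitrarily large self-intersection number. Lenzhen--Rafi's theorem is proved for simple closed curves, and you only offer a hope that their mechanism survives ``extra bookkeeping for self-intersections'' with a uniform constant; nothing in your write-up establishes this, and without uniformity in $g$ the inequality $\frac{1}{n}\abs{Z_n^\mu}_y\le K\max\pare*{\frac{1}{n}\abs{Z_n^\mu}_x,\frac{1}{n}\abs{Z_n^\mu}_z}$ does not survive the limit $n\to\infty$. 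The limit-passing step itself is fine (the three almost sure events have full-measure intersection), but as written the proof rests on an unproven and substantially harder statement than the one the corollary requires. The lesson is that the two-sided comparison of \cref{scalingconstantboththeorem} lets you apply quasiconvexity only to the finitely many fixed curves of $F$, where Lenzhen--Rafi applies verbatim, rather than to the random, increasingly complicated elements $Z_n^\mu$.
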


The drift of the random walk is closely related with the dimension of the hitting measure. Let $\H$ be the hyperbolic plane. Each metric $\rho$ can be interpreted as a representation $\rho:\pi_1(S,p)\to \Isom_+(\H)$, which can then be extended to an action of $\pi_1(S,p)$ on the Gromov boundary $\partial \H$. We then define the measure $\mu_\rho$ on $\Isom_+(\H)$ as the pushforward measure of $\mu$ by $\rho$. A measure $\nu\in \partial \H$ is $\mu_\rho$-stationary if for every measurable $A\subset \partial \H$ we have
\[
\nu(A)= \sum_{g\in \rho(\pi_1(S,p))} \mu(\rho^{-1}(g)) \nu(g^{-1}A).
\]
The measure $\nu$ can also be interpreted as the hitting measure of a random walk on $\H$ driven by $\mu_\rho$. Each $\mu_\rho$ has a unique stationary measure, which we denote by $\nu_\rho$. Let $\dim(\nu_\rho)\in[0,1]$ denote the Hausdorff dimension of $\nu_\rho$. Deroin--Kleptsyn--Navas \cite[Conjecture 1.21]{dkn2009}, and Karlsson--Ledrappier \cite{kl2011} more generally, conjectured that the dimension is bounded away from 1 whenever the support of $\mu$ is finite. Kosenko--Tiozzo \cite{tk2020} and Azemar--Gadre--Gouëzel--Haettel--Lessa--Uyani \cite{DriftProper} recently made some partial progress regarding this conjecture, but it is still open.

Tanaka \cite{t2019} shows that
\[
\dim(\nu_\rho)=\frac{h}{\Drift_\mu(\rho)},
\]
where $h$ is the entropy of the random walk, a value depending only on the measure $\mu$. In view of this relation, each result regarding the value of the drift can be translated to a result regarding the dimension of the stationary measure. For example, from \cref{th:exponentialGrowthalmosteverywhere} we have the following.
\begin{corollary}
     Let $\mu$ be an admissible measure on $\pi_1(S,p)$ with finite first moment and finite entropy. Furthermore, let $o\in \T(S)$ be a basepoint, $0<\theta<1$ and $\lambda$ be the Lebesgue measure on $Q^1(o)$. Then, for $\lambda$-almost all directions $q\in Q^1(o)$ we have 
    \[
     e^{-\frac{1}{\theta}t}< \dim(\nu_{\tray{q}{t}})<  e^{-\theta t},
    \]
    for any $t>t(q)$, where $t(q)$ is some finite time depending on $q$.
\end{corollary}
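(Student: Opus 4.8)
The plan is to deduce this directly from \cref{th:exponentialGrowthalmosteverywhere} by way of Tanaka's identity $\dim(\nu_\rho)=h/\Drift_\mu(\rho)$. Since $\mu$ has finite entropy and $\Gamma=\pi_1(S,p)$ is non-elementary, $h$ is a fixed constant in $(0,\infty)$; thus passing from the drift to the dimension amounts to inverting the two-sided exponential bound and rescaling by $h$. The only point requiring care is that multiplying by $h$ perturbs the exponential rate, so one has to start from a slightly better rate and let the extra room absorb the constant.

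Concretely, I would fix $\theta\in(0,1)$, set $\theta_0\coloneqq(1+\theta)/2\in(\theta,1)$, and apply \cref{th:exponentialGrowthalmosteverywhere} with parameter $\theta_0$ in place of $\theta$. This yields a full-Lebesgue-measure set of directions $q\in Q^1(o)$ together with a finite time $t_0(q)$ such that $e^{\theta_0 t}<\Drift_\mu(\tray{q}{t})<e^{(1/\theta_0)t}$ for all $t>t_0(q)$. By Tanaka's formula, on the same set of $q$ and the same range of $t$, this becomes $h\,e^{-(1/\theta_0)t}<\dim(\nu_{\tray{q}{t}})<h\,e^{-\theta_0 t}$.

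It then remains to absorb $h$ into the exponents. For the upper estimate, $h\,e^{-\theta_0 t}\le e^{-\theta t}$ is equivalent to $(\theta_0-\theta)t\ge\ln h$, which holds once $t$ exceeds a threshold $t_1$ depending only on $h,\theta,\theta_0$, because $\theta_0-\theta>0$. For the lower estimate, $e^{-(1/\theta)t}\le h\,e^{-(1/\theta_0)t}$ is equivalent to $(1/\theta_0-1/\theta)t\le\ln h$, and since $1/\theta_0-1/\theta<0$ the left-hand side tends to $-\infty$, so this holds once $t$ exceeds some threshold $t_2$. Taking $t(q)\coloneqq\max\{t_0(q),t_1,t_2\}$ then gives the claimed strict inequalities for all $t>t(q)$.

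I do not expect a genuine obstacle here: the substance is entirely in \cref{th:exponentialGrowthalmosteverywhere} and in Tanaka's theorem, both of which are available. The only things to be vigilant about are the bookkeeping in the previous paragraph — in particular the observation that a single auxiliary parameter $\theta_0>\theta$ simultaneously orients both inequalities, since each of the two reductions needs exactly $\theta_0>\theta$ — and the explicit use of the added hypothesis of finite (and positive) entropy, so that $\ln h$ is an honest finite constant and the thresholds $t_1,t_2$ genuinely exist.
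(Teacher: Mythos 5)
Your proposal is correct and follows exactly the route the paper intends: the paper states this corollary without proof as an immediate consequence of Theorem \ref{th:exponentialGrowthalmosteverywhere} combined with Tanaka's formula, and your bookkeeping with the auxiliary parameter $\theta_0>\theta$ to absorb the constant $h$ is the right way to fill in the omitted details.
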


\subsection{Proof overview}

Azemar--Gadre--Gouëzel--Haettel--Lessa--Uyani \cite{DriftProper} proved that the drift of the random walk is, up to a multiplicative constant, greater than the maximum length of a fixed finite filling set of curves. The driving observation of this paper is that this inequality can be improved to an equality up to a constant multiple. That is, we first prove the following result.
\begin{theorem}\label{scalingconstantboththeorem}
     Let $\mu$ be an admissible measure on $\pi_1(S,p)$ with finite first moment. Furthermore, let $F$ be a finite filling set of curves in $S$ and let $M_\rho^F$ be the maximum hyperbolic length among the curves with respect to the hyperbolic metric $\rho$. Then there is some $K<\infty$ depending solely on $\mu$ and $F$ such that
    \[
        \frac{1}{K} M_\rho^F \le \Drift_\mu(\rho)\le K M_\rho^F.
    \]
\end{theorem}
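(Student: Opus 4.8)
The lower bound $\tfrac{1}{K}M_\rho^F\le\Drift_\mu(\rho)$ is precisely the estimate of \cite{DriftProper} recalled above, so the new content is the upper bound $\Drift_\mu(\rho)\le K M_\rho^F$. The plan is to bound $|g|_\rho$, for an arbitrary nontrivial $g\in\pi_1(S,p)$ and an arbitrary hyperbolic metric $\rho$, by the single topological quantity $i(g,F)\coloneqq\sum_{c\in F}i(g,c)$, the total geometric intersection number of the conjugacy class of $g$ with $F$, with a coefficient governed by $M_\rho^F$; and then to let the random walk run. The gain over \cite{DriftProper} is that $M_\rho^F$ will carry all of the $\rho$-dependence, the remaining coefficient being $\rho$-free.

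The first step is a pointwise estimate: there is a constant $C_1=C_1(F)>0$, depending only on the topological type of $(S,F)$, with $|g|_\rho\le C_1\,M_\rho^F\,i(g,F)$ for every $g$ and every hyperbolic $\rho$. To see this, replace $g$ and the curves of $F$ by their $\rho$-geodesic representatives $g^{*\rho}$ and $F^{*\rho}$. Since $F$ fills, $S\setminus F^{*\rho}$ is a union of open disks, and the (at most $i(g,F)$) intersection points of $g^{*\rho}$ with $F^{*\rho}$ cut the closed geodesic $g^{*\rho}$ into arcs, each of which has interior in a single complementary disk and endpoints on its frontier. The metric completion of a complementary disk is an honest hyperbolic polygon whose sides are subarcs of curves of $F^{*\rho}$ — hence of length at most $M_\rho^F$ — and whose number of sides is bounded by a constant depending only on $(S,F)$; so its perimeter is at most $C_1'M_\rho^F$. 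An arc of $g^{*\rho}$ across such a polygon must be length-minimizing among paths in the polygon with the same endpoints, for otherwise one could shorten $g^{*\rho}$ within its free homotopy class; hence its length is at most half the perimeter, that is, at most $\tfrac12 C_1'M_\rho^F$. Summing over the arcs gives the claim with $C_1=\tfrac12 C_1'$.

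For the second step, observe that $g\mapsto i(g,F)$ is subadditive under products — concatenating loop representatives of $g$ and $h$ produces a loop representative of $gh$ — so $\bigl(i(Z_n^\mu,F)\bigr)_n$ is a subadditive cocycle over the shift, and Kingman's subadditive ergodic theorem gives an almost sure constant $\iota_F\coloneqq\lim_n i(Z_n^\mu,F)/n=\inf_n\Ebb\!\left[i(Z_n^\mu,F)\right]/n\le\Ebb_\mu\!\left[i(\cdot,F)\right]$. This last expectation is finite: fixing once and for all an auxiliary hyperbolic metric $\rho_0$, the collar lemma shows that each transverse intersection of the closed geodesic representing $g$ with a fixed simple closed $\rho_0$-geodesic costs a definite amount of length, whence $i(g,F)\le\delta_0^{-1}|g|_{\rho_0}$ for a constant $\delta_0=\delta_0(\rho_0,F)>0$, and $\mu$ has finite first moment. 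Thus $\iota_F<\infty$, and it depends only on $\mu$ and $F$. Combining with the pointwise estimate, almost surely $|Z_n^\mu|_\rho/n\le C_1 M_\rho^F\,i(Z_n^\mu,F)/n$ for all $n$, and letting $n\to\infty$ along the full-measure set where both limits exist yields $\Drift_\mu(\rho)\le C_1\iota_F\, M_\rho^F$. Taking $K$ to be the maximum of $C_1\iota_F$ and the constant from \cite{DriftProper} completes the proof.

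The one substantial point is the pointwise bound $|g|_\rho\le C_1 M_\rho^F\,i(g,F)$, and the reason it is delicate is that it must be uniform over all of $\T(S)$: the assertion is that the entire geometry of the complementary disks of a filling system is controlled by the single number $M_\rho^F$. Making this rigorous requires checking that the combinatorial type of the complement (in particular the number of sides of each complementary polygon) is a topological invariant of $(S,F)$, that the half-perimeter diameter bound for a disk survives the self-identifications a complementary region may undergo in $S$ — its completion being nonetheless an embedded hyperbolic polygon — and that the geodesic arcs of $g^{*\rho}$ are genuinely length-minimizing inside these polygons, with degenerate configurations (arcs running through vertices of $F^{*\rho}$, or $g$ a power of a curve of $F$) dealt with separately. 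Everything downstream of this estimate is soft.
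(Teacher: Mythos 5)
Your first step --- cutting the $\rho$-geodesic representative of $g$ at its intersection points with the geodesic realization of $F$, and bounding each resulting arc by the perimeter of the complementary polygon it crosses, which is itself at most a topological constant times $M_\rho^F$ --- is exactly the content of the paper's Lemma~\ref{le:upperboundlength}, so that part is on target. The divergence, and the gap, is in your second step. You claim that $g\mapsto i(g,F)$ is subadditive because ``concatenating loop representatives of $g$ and $h$ produces a loop representative of $gh$.'' Concatenation requires \emph{based} loops, whereas $i(g,c)$ is the geometric intersection number of the \emph{free} homotopy class, and the minimal based representative can meet $c$ strictly more often than the minimal free one. In fact subadditivity of the free intersection number is false: on a genus-two surface with $c$ separating, take $g$ conjugate into one side and $h$ into the other; then $i(g,c)=i(h,c)=0$ but $gh$ is not conjugate into either subsurface group, so its geodesic representative must cross $c$ and $i(gh,c)\ge 2$. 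So the sequence $i(Z_n^\mu,F)$ is not a subadditive cocycle as written and Kingman does not apply to it directly.

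The fix is cheap, and one option recovers the paper's actual argument. Either (a) replace $i(g,F)$ throughout by $\tilde{\imath}(g,F)$, the minimal number of intersections with $F$ over loops \emph{based at $p$}; this dominates $i(g,F)$, so your pointwise estimate $|g|_\rho\le C_1 M_\rho^F\,\tilde{\imath}(g,F)$ survives, $\tilde{\imath}$ is genuinely subadditive, and your Kingman argument then goes through (you do still need to check integrability of $\tilde{\imath}$ against $\mu$, which asks for a first moment in the based displacement rather than the translation length). Or (b), as the paper does, skip the ergodic theorem entirely: fix a basepoint $o\in\T(S)$ once and for all and use the collar lemma \emph{at $o$} to get $i(g,F)\le\sum_{\alpha\in F}\delta_o(\alpha)^{-1}\hyp_o(g)$, whence the $\rho$-independent factor in your pointwise bound becomes $\hyp_o(g)$ and the conclusion $\Drift_\mu(\rho)\le C M_\rho^F\Drift_\mu(o)$ follows just by dividing by $n$ and invoking the already-established existence of the drift at $o$. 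Route (b) is softer and is what the paper's proof of Theorem~\ref{scalingconstantboththeorem} amounts to; your route (a), once repaired, is a legitimate alternative whose constant $\iota_F$ is arguably more intrinsic, but as submitted the subadditivity step is wrong.
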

Many properties already known about the behaviour of hyperbolic lengths can then be translated directly to the behaviour of the drift. Of particular relevance are the works of Masur \cite{MasurTwo}, Choi--Rafi \cite{ChoiRafiComparison} and Lenzhen--Rafi \cite{LenzhenKasraConvexity}, who, respectively, find values for the growth of the hyperbolic lengths along Jenkins--Strebel directions, show relations for the length of hyperbolic curves between thick parts of \teichmuller space, and prove the quasiconvexity of the hyperbolic length up to a constant. 

Choi--Rafi's estimates of the hyperbolic lengths in the thick part, combined with Lenzhen--Rafi's quasi-convexity can be used to obtain exponential growth along the whole geodesic, provided the said geodesic does not spend too much proportional continuous time outside the thick part, relative to the total time from the start. The control on the continuous time spent outside the thick part by a common geodesic is given by estimates of Dowdall--Duchin--Masur \cite{DowdallStatisticalHyp} in the case of many Lebesgue-like measures, and of Azemar--Gadre--Jeffreys \cite{AzemarStatisticalHyp} for harmonic measures arising from random walks on the mapping class group with finite first moments.

Finally, we use that both Jenkins--Strebel and recurrent directions are dense to build a geodesic which alternates between approaching the linear growth of Jenkins--Strebel directions and the exponential growth of recurrent directions.

\subsection*{Acknowledgments}
The author would like to thank Maxime Fortier Bourque and Vaibhav
Gadre for many helpful discussions and corrections.

\section{Background}\label{se:background}

\subsection{\teichmuller space}
In this section we fix the notation and introduce some concepts needed for the rest of the paper. A detailed introduction to \teichmuller theory can be found, among others places, in the work of Farb--Margalit \cite{primer} or Gardiner \cite{GardinerBook}. Let $S$ be a closed surface with negative Euler characteristic. The \emph{\teichmuller space} of $S$ is defined as the set of equivalence classes of pairs $(X,f)$ where $X$ is a Riemann surface and $f:S\to X$ is an orientation preserving homeomorphism. Two pairs $(X,f)$ and $(Y,g)$ are equivalent if there is a conformal diffeomorphism $h:X\to Y$ such that $g^{-1}\circ h \circ f$ is isotopic to the identity.

The \emph{\teichmuller distance} between two points $[(X,f)],[(Y,g)]\in \T(S)$ is defined by $d_{\teich}([(X,f)],[(Y,g)])=\frac{1}{2}\log \inf K$, where the infimum is taken over all $K\ge 1$ such that there exists a $K$-quasiconformal homeomorphism $h: X\to Y$ with $g^{-1}\circ h \circ f$ isotopic to the identity. There is a quasiconformal map realizing the infimum.

Let $\gamma$ be an isotopy class of an essential curve in $S$ and $[(X,f)]\in \T(S)$. We shall usually refer to curves when we actually mean their isotopy classes. By the uniformization theorem there is a unique hyperbolic metric on $X$. We define the hyperbolic length of $\gamma$ in $[(X,f)]$ as the shortest length of the isotopy class $f(\gamma)$ in the unique hyperbolic metric on $X$. We shall denote this value as $\hyp_{[(X,f)]}(\gamma)$.

Given an $\epsilon>0$ the \emph{$\epsilon$-thick} part of \teichmuller space $\T_\epsilon(S)\subset \T(S)$ is the subset of marked hyperbolic metrics where all essential closed curves have hyperbolic length at least $\epsilon$.

Let $TX$ be the tangent bundle over $X$. A \emph{quadratic differential} of a Riemann surface $X$ is a map $q:TX\to \C$ such that $q(\lambda v)= \lambda^2 q(v)$ for every $\lambda\in \C$ and $v\in TX$. We denote by $Q([(X,f)])$ the holomorphic quadratic differentials with finite area under the conformal structure of $[(X,f)]$. There is a natural identification between $Q([(X,f)])$ and the cotangent space at $[(X,f)]$, so for each $q\in Q([(X,f)])$ we denote as $\tray{q}{\cdot}:\R_+\to \T(S)$ the geodesic associated to $q$. If we restrict to quadratic differentials of area $1$, which we denote as $Q^1([(X,f)])$, the associated geodesics have speed $1$. That is, for $q\in Q^1([(X,f)])$, $d_{\teich}(\tray{q}{t},\tray{q}{s})=|t-s|$. Given a quadratic differential $q$ we can define its vertical foliation $V(q)$ as the measured foliation defined by the smooth paths $\gamma$ such that $q(\gamma'(t))<0$ for every $t$ in the interior of the domain, as well as the transverse measure given by $|\text{Re}\sqrt{q}|$. If each non critical trajectory is a closed curve the quadratic differential is called \emph{Jenkins--Strebel}. The set of Jenkins--Strebel quadratic differentials is dense within the set of differentials.

By considering the sphere of radius one around the point $o$ we get an embedding of $Q^1(o)$ into $\T(S)$, so we can endow $Q^1(o)$ with a Lebesgue measure. On the other hand, Kaimonovich--Masur \cite[Theorem 2.2.4]{KaiMasur} proved that for non-elementary random walks on the mapping class group, typical sample paths converge to the Tursthon boudnary of \teichmuller space and that resulting stationary measure is supported on uniquely ergodic measured geodesics. As proven by Masur \cite{MasurTwo} differentials in $Q^1(o)$ with uniquely ergodic vertical foliations converge to the projective class of that foliation in the Thurston boundary, Hence, the stationary measure can be pulled back to get another measure on $Q^1(o)$.

\subsection{Random walks}

Let $\Gamma$ be a group and $\mu$ a probability measure on $\Gamma$. We say that $\mu$ is \emph{admissible} if the semi group generated by its support is equal to $\Gamma$. We shall always assume that $\mu$ is admissible. The random walk on $\Gamma$ determined by $\mu$ is the sequence of random variables $(Z_n)_{n\in \N}$ defined by
\[
Z_n=g_1g_2\ldots g_n,
\]
where $g_i$ are $i.i.d.$ elements of $\Gamma$ with distribution $\mu$.

Let $(\H,d)$ be the hyperbolic plane, and $G=\Isom_+(\H)$ be the group of orientation preserving isometries of $\H$. Furthermore, let $\Gamma=\pi_1(S,p)$ be the fundamental group of $S$ based at $p$ and let $\rho:\Gamma\to G$ be a discrete faithful representation. The length of an element $g\in \Gamma$ with respect to $\rho$ is defined $|g|_\rho=\inf_{x\in \H} d(x,\rho(g)x)$. Equivalently, $|g|_\rho$ is the length of the core curve of the hyperbolic cylinder $\H/\rho(g)$, or the minimal length within the isotopy class of curves associated to $\rho(g)$ in $\H/\rho(\Gamma)$. We say $\mu$ has \emph{finite first moment} with respect to $\rho$ if $\sum_{g\in \Gamma} |g|_\rho \mu(g)<\infty$. We shall always assume that $\mu$ has finite first moment with respect to some $\rho$. The \emph{linear drift} of the random walk is then defined as
\[
\Drift_\mu(\rho) := \lim_{n\to\infty} \frac{|Z_n|_\rho}{n}.
\]
By Kingman's subadditive ergodic theorem the drift is well-defined, and since the measure has finite first moment with respect to $\rho$, it is finite.

For each point $[(X,f)]\in \T(S)$ we can choose a faithful representation $\rho:\Gamma\to G$ such that $\H/\rho(\Gamma)=X$ and $\rho(\gamma)$ is the deck transformation associated to $f(\gamma)$. While such assignment is not unique, the length of an element $\gamma\in\Gamma$ is a well defined function of this assignment, and hence so is $\Drift_\mu(\rho)$. Indeed, for suitably chosen representation the curve associated to $\rho(\gamma)$ is $f(\gamma)$, so $|\gamma|_\rho=\hyp_{[(X,f)]}(\gamma)$.
Therefore the elements of $\T(S)$ can be seen as classes of discrete faithful representations $\rho:\Gamma \to G$. By a slight abuse of notation we shall treat the elements of $\T(S)$ as any choice from each class, as the properties we shall deal with are invariant within the class.

Let $F$ be a fixed finite filling set of closed curves on $S$. Even though we are asking that the support of $\mu$ generates $\Gamma$ as a semigroup, it might be that $F$ is not contained entirely in the support of $\mu^k$ for any $k\ge 1$, as the elements of the support of $\mu^k$ are formed by concatenating precisely $k$ elements from the support of $\mu$. To avoid this we add the neutral element to the support of $\mu$ as follows. Let $\epsilon>0$ and let $\mu_\epsilon=(1-\epsilon)\mu+\epsilon\delta_e$ be a relaxation of $\mu$, where we introduce a slight probability of the random walk not moving at each step. For any $\rho\in \T(S)$ we have $\Drift_{\mu_\epsilon}(\rho)=(1-\epsilon)\Drift_\mu(\rho)$. Furthermore, since $\mu$ is admissible, there is some $k\in \N$ such that the curves associated to the group elements of $\supp(\mu_{\epsilon}^k)$ contain $F$. Then, $\Drift_{\mu_\epsilon^k}(\rho)=k(1-\epsilon)\Drift_{\mu}(\rho)$. Since all the results in this paper regarding the drift are true up to multiplicative constants we will assume that $\supp(\mu)$ already contains $F$. Denote $M_\rho^F=\max_{\gamma\in F}(\hyp_\rho(\gamma))$. 
We shall first prove the following well known result. The proof is similar to the one done by Minsky \cite[Lemma 4.7]{Minsky}
\begin{lemma}\label{le:upperboundlength}
Let $F$ be a finite filling set of curves in $S$ and let $o\in \T(S)$ be a fixed basepoint. Then, there is a constant $K$ such that, for any $\rho\in \T(S)$ and free curve $\gamma$ in $S$ we have,
\[\hyp_\rho(\gamma)\le K M_\rho^F \hyp_o(\gamma).\]
\end{lemma}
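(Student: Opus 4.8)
The plan is to use the geometric intersection number $i(\gamma,F):=\sum_{\alpha\in F} i(\gamma,\alpha)$ as an intermediary, proving the two comparisons
\[
\hyp_\rho(\gamma)\;\le\; E\, M_\rho^F\, i(\gamma,F)
\qquad\text{and}\qquad
i(\gamma,F)\;\le\; C_1\,\hyp_o(\gamma),
\]
where $E=E(S,F)$ depends only on the topological type of the pair $(S,F)$ while $C_1=C_1(o,F)$ also depends on the basepoint; the lemma then follows with $K=E C_1$. We may assume $\gamma$ is essential, and then $i(\gamma,F)\ge 1$: if $\gamma$ met $\bigcup_{\alpha\in F}\alpha$ zero times, a geodesic representative of $\gamma$ would lie in the closure of one component of the complement, which is a disk because $F$ fills, forcing $\gamma$ to be trivial. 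After a harmless perturbation we may also take the geodesic representatives of the curves of $F$ to be in general position (pairwise transverse, no triple points).

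For the first comparison, fix $X\in\T(S)$ (only $X=\rho$ is needed) and let $F^X$ be the union of the $X$-geodesic representatives of the curves in $F$; since $F$ fills, the components of $S\setminus F^X$ are open disks, each bounded by a number of edges of $F^X$ that is bounded by a topological constant. The $X$-geodesic representative $\gamma^X$ crosses $F^X$ in exactly $i(\gamma,F)$ points, and is thereby cut into $i(\gamma,F)$ sub-arcs, each lying in the closure of one complementary disk. Pushing each such sub-arc, rel endpoints, across its disk onto the shorter boundary path joining its endpoints homotopes $\gamma^X$ to a closed path running along $F^X$ with at most $E\,i(\gamma,F)$ edges; since each edge is a sub-arc of some $\alpha^X$ it has $X$-length at most $\hyp_X(\alpha)\le M_X^F$, so $\hyp_X(\gamma)\le E\,M_X^F\,i(\gamma,F)$.

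For the second comparison, I would work entirely in the fixed metric $o$. Let $F^o$ be the union of the $o$-geodesic representatives, let $V$ be its finite set of vertices (intersection points, self-intersections included), and let $\gamma^o$ be the $o$-geodesic representative, so that $\gamma^o$ is cut into exactly $i(\gamma,F)$ arcs by its $i(\gamma,F)$ crossings with $F^o$. One would like each arc to have $o$-length bounded below by a constant, which would give $i(\gamma,F)\le \hyp_o(\gamma)/\epsilon_0$ at once, but this fails: near a vertex a geodesic can cut off an arbitrarily thin corner of a complementary polygon. The fix is a two-case count. Choose $r=r(o,F)>0$ less than a quarter of the injectivity radius of $(S,o)$ and small enough that any two distinct local strands of $F^o$ that come within $2r$ of one another do so only inside the $r$-balls around $V$. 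If an arc $a$ of $\gamma^o$ between consecutive $F^o$-crossings has $o$-length $<r$, then its two endpoints lie on distinct local strands of $F^o$ — otherwise $a$ together with a strand sub-arc would bound a bigon, impossible for geodesics in minimal position — hence $a$ lies in a uniformly bounded neighbourhood of $V$. The arcs of length $\ge r$ number at most $\hyp_o(\gamma)/r$; the short arcs are controlled by an injectivity-radius count, since a fixed lift of $\gamma^o$ meets the $r$-ball about any fixed point in a single sub-arc and two distinct lifts of a vertex lie at distance at least twice the injectivity radius, so $\gamma^o$ visits a fixed bounded neighbourhood of any given vertex only $O(\hyp_o(\gamma))$ times, each visit crossing $F^o$ a bounded number of times. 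Adding the two counts gives $i(\gamma,F)\le C_1\hyp_o(\gamma)$.

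The step I expect to be the main obstacle is this second comparison, and inside it the near-vertex crossings: without the injectivity-radius bookkeeping one only obtains that $\gamma^o$ crosses $F^o$ finitely often, which is far too weak — the lemma needs the count to be linear in $\hyp_o(\gamma)$, uniformly over all $\gamma$.
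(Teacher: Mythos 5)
Your proof is correct, and its first half --- cutting the $\rho$-geodesic representative of $\gamma$ at its $i(\gamma,F)$ intersections with $F$ and sliding each complementary sub-arc, rel endpoints, onto the boundary of its polygon, giving $\hyp_\rho(\gamma)\le E\,M_\rho^F\, i(\gamma,F)$ --- is essentially the paper's first step (the paper phrases it via the diameter of each complementary polygon, bounded by its perimeter, bounded in turn by $2|F|M_\rho^F$). Where you genuinely diverge is the second comparison $i(\gamma,F)\le C_1\hyp_o(\gamma)$. The paper gets this in one line from the collar lemma: around each $\alpha\in F$ one embeds an annulus of definite thickness $\delta_o(\alpha)$ in the metric $o$, and each essential crossing of $\alpha$ forces $\gamma$ to traverse that annulus, so $i(\alpha,\gamma)\,\delta_o(\alpha)\le\hyp_o(\gamma)$. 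You instead run a long/short dichotomy on the arcs of $\gamma^o\setminus F^o$ and control the short arcs by an injectivity-radius packing count near the vertices of $F^o$; this works (the bigon exclusion lifts correctly to $\H$, and the bound on visits to a fixed ball is a standard separated-orbit count), but it is considerably more laborious than the annulus argument. Two remarks: your version has the minor advantage of not requiring the curves of $F$ to be simple, which the embedded-annulus argument implicitly does; and your visit count is really $O(\hyp_o(\gamma))+O(1)$, which you should convert to $O(\hyp_o(\gamma))$ by noting that $\hyp_o(\gamma)\ge 2\,\mathrm{inj}(o)>0$ for every essential $\gamma$.
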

\begin{proof}
Let $D_\rho$ be the maximum diameter of the polygons formed by the distance minimizing configuration of the curves $F$, and $\partial D_\rho$ its perimeter. Since the curves of $F$ cut $S$ into polygons, the value of $\hyp_\rho(\gamma)$ is bounded by the number of intersections of the curve $\gamma$ with $F$ multiplied by the maximum diameter of the polygons. That is,
\[
\hyp_\rho(\gamma)\le D_\rho \sum_{\alpha\in F}i(\gamma,\alpha)\le \partial D_\rho \sum_{\alpha\in F}i(\gamma,\alpha).
\]

Under the metric defined by $o$, for any closed curve $\alpha$ we can isometrically embed an annulus around $\alpha$ of thickness $\delta_o(\alpha)>0$. Then, for any other curve $\gamma$ we have $i(\alpha,\gamma)\delta_o(\alpha)\le \hyp_o(\gamma)$. Hence,
\[
\hyp_\rho(\gamma)\le \partial D_\rho \sum_{\alpha\in F}\frac{1}{\delta(\alpha)}\hyp_o(\gamma).
\]
Finally, the maximum perimeter of the polygons is smaller than twice the sum of the lengths of all the curves in $F$, so, denoting by $|F|$ the cardinality of $F$,
\[
\partial D_p \le 2 \sum_{\alpha\in F} \hyp_p(\alpha)\le \longestcurv^F_p 2|F|.
\]
We get the result by setting $K=2|F|\sum_{\alpha\in F}\frac{1}{\delta(\alpha)}$.
\end{proof}

Since the number of curves in $F$ is finite, $\longestcurv_\rho^F$ is finite for all $\rho\in \T(S)$. Hence, for any other $\rho\in \T(S)$ we have 
\[
    \sum_{g\in \Gamma }|g|_\rho \mu(g) \le K \longestcurv_\rho^F \sum_{g\in \Gamma} |g|_o \mu(g) <\infty.
\]
Therefore, if $\mu$ has finite first moment with respect so some basepoint $o\in \T(S)$, it has finite first moment for all $\rho\in \T(S)$. That is, it makes sense to say that the measure $\mu$ has finite first moment if it has finite first moment with respect to at least one point in $\T(S)$.

We have the following lower bound.
\begin{theorem}[Azemar--Gadre--Gouëzel--Haettel--Lessa--Uyani {\cite[Theorem 3.3]{DriftProper}}]\label{scalingconstanttheorem}
 Let $\mu$ be an admissible measure on $\pi_1(S,p)$ with finite first moment, and let $F$ be a finite filling set of curves in $S$. There exists a constant \(c > 0\) such that \(\Drift_\mu(\rho) \ge c \longestcurv_{\rho}^F\) for all \([\rho] \in \T(S)\).
\end{theorem}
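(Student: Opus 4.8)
The plan is to realize the drift as the hyperbolic length of a single \emph{deterministic} geodesic current and then to finish by a compactness argument. Write $\hat Z_n$ for the free homotopy class of $Z_n$, so that $|Z_n|_\rho=\length_\rho(\hat Z_n)$. The first and main step is to show that the normalized currents $\tfrac1n\hat Z_n$ converge, almost surely and to a deterministic geodesic current $m_\mu$, in Bonahon's space of geodesic currents $\mathcal C(S)$. Since geometric intersection number is subadditive under concatenation up to a bounded additive error, for each fixed current $\beta$ the quantities $i(\hat Z_n,\beta)$ form a near-subadditive cocycle along the walk, so Kingman's subadditive ergodic theorem gives that $\tfrac1n i(\hat Z_n,\beta)$ converges almost surely to a deterministic limit $\ell(\beta)$ (deterministic because the driving sequence $(g_i)$ is i.i.d., hence ergodic). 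Running this over a countable dense family of test currents and using tightness identifies a deterministic limiting current $m_\mu$ with $i(m_\mu,\beta)=\ell(\beta)$. Because $\length_\rho(\cdot)=i(\cdot,L_\rho)$ for the Liouville current $L_\rho$ of $\rho$, dividing $|Z_n|_\rho=i(\hat Z_n,L_\rho)$ by $n$ and passing to the limit yields the clean formula $\Drift_\mu(\rho)=i(m_\mu,L_\rho)=\length_\rho(m_\mu)$.

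The second step is to check that $m_\mu$ is \emph{filling}, that is $i(m_\mu,\beta)>0$ for every nonzero current $\beta$; equivalently $\tfrac1n i(\hat Z_n,\alpha)$ has a strictly positive limit for every essential simple closed curve $\alpha$. This is where admissibility enters: since $\supp\mu$ generates $\Gamma$, the walk cannot asymptotically avoid crossing any fixed $\alpha$, and the crossing number grows linearly. I would prove this by exhibiting, for each $\alpha$, a definite per-step probability of an increment that forces a new essential crossing which cannot be erased by bounded cancellation, using the hyperbolicity of $\H$ to rule out large-scale backtracking.

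Granting these two steps, the conclusion follows from compactness. Normalizing currents to have unit length at a fixed basepoint $\rho_0$ gives a compact slice $\Sigma\subset\mathcal C(S)$. On $\Sigma$ the map $\beta\mapsto i(m_\mu,\beta)$ is continuous and, by filling of $m_\mu$, strictly positive, hence bounded below by some $a>0$; meanwhile each $\beta\mapsto i(\gamma,\beta)$ is continuous hence bounded above by some $b_\gamma<\infty$ on $\Sigma$. By homogeneity this yields $i(m_\mu,\beta)\ge (a/b_\gamma)\,i(\gamma,\beta)$ for all currents $\beta$ and all $\gamma\in F$. Setting $c=\min_{\gamma\in F} a/b_\gamma>0$ and evaluating at $\beta=L_\rho$ gives
\[
\Drift_\mu(\rho)=i(m_\mu,L_\rho)\ \ge\ c\,\max_{\gamma\in F}\length_\rho(\gamma)=c\,\longestcurv_\rho^F
\]
for every $\rho\in\T(S)$, which is exactly the claim.

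The hard part is the first step: producing a deterministic limiting current together with the exact drift formula. The subtlety is that the based trajectory $\rho(Z_n)x_0$ converges to a \emph{random} point of $\partial\H$ (a random limiting lamination), so one must take care that it is the empirical current $\tfrac1n\hat Z_n$ of the closed geodesics, and not that random boundary limit, which stabilizes; it is precisely the subadditivity of intersection numbers together with Kingman's theorem that makes the average deterministic. Establishing the subadditivity with a uniform error constant, and the tightness needed to upgrade the pointwise convergence of the functionals $i(\hat Z_n,\cdot)$ to convergence of currents, are the technical points demanding the most care.
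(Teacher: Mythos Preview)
This theorem is not proved in the present paper: it is quoted verbatim from \cite[Theorem 3.3]{DriftProper} and used as a black box (the paper only supplies the complementary \emph{upper} bound, in the proof of \cref{scalingconstantboththeorem}). So there is no ``paper's own proof'' to compare against here.

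On the merits of your sketch: the overall strategy via Bonahon's geodesic currents is sound and is in fact one of the standard routes to this kind of statement. Step~1 is essentially correct once one replaces your informal ``subadditivity of intersection number under concatenation'' by the precise statement that for a fixed basepoint $x_0$ the crossing number of the geodesic segment $[x_0,\rho(Z_n)x_0]$ with the lifts of a simple closed curve $\alpha$ is genuinely subadditive along the walk, and differs from $i(\hat Z_n,\alpha)$ by a bounded amount; Kingman then gives a deterministic limit, and tightness of $\tfrac1n\hat Z_n$ follows because $i(\tfrac1n\hat Z_n,L_o)=\tfrac1n|Z_n|_o\to\Drift_\mu(o)<\infty$, so the family sits in a compact slice of $\mathcal C(S)$.

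The real gap is Step~2, the filling of $m_\mu$. Your proposed mechanism---``a definite per-step probability of an increment that forces a new essential crossing which cannot be erased by bounded cancellation''---is not an argument as it stands. In a surface group, crossings produced by a single increment \emph{can} be undone by later increments (relations in $\pi_1(S)$ are exactly what allow this), and ``hyperbolicity rules out large-scale backtracking'' is a slogan, not a proof: the walk does backtrack, just sublinearly, and one needs a quantitative statement to conclude that the net crossing count with $\alpha$ grows linearly. A cleaner way to close this gap is to argue by contradiction: if $i(m_\mu,\alpha)=0$ for some simple closed curve $\alpha$, then $m_\mu$ is supported in a proper essential subsurface $Y\subset S$; pinching the boundary of $Y$ along a path $\rho_t$ in $\T(S)$ keeps $\length_{\rho_t}(m_\mu)=\Drift_\mu(\rho_t)$ bounded while $d_{\teich}(\rho_t,o)\to\infty$, contradicting the properness (indeed the linear lower bound) of the drift---but of course that is exactly what you are trying to prove, so one must instead invoke an independent input (e.g.\ positivity of the drift in the curve complex, or non-atomicity and equivariance properties of the hitting measure forcing the support of $m_\mu$ to meet every half-space). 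Until you supply such an input, Step~2 remains a genuine gap.
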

Using \cref{le:upperboundlength} it is relatively straightforward to check that the value $\longestcurv_\rho^F$ also serves to give an upper bound.

\begin{proof}[Proof of \cref{scalingconstantboththeorem}]
By \cref{scalingconstanttheorem} there is $c>0$ such that $\Drift_\mu(\rho)\ge c \longestcurv_\rho^F$
, so we only have to prove the upper bound.
Fix some basepoint $o\in \T(S)$. By \cref{le:upperboundlength} there is some $C<\infty$ such that  $|Z_{n}|_\rho\le C \longestcurv_p |Z_{n}|_o$. Therefore,
\[
\Drift_\mu(\rho)=\lim_{n\to \infty} \frac{|Z_{n}|_\rho}{n}\le C \longestcurv_p\lim_{n\to \infty} \frac{|Z_{n}|_o}{n} = C \longestcurv_p \Drift_\mu(o),
\]
so the result follows from setting $K=\max\left(C\Drift_\mu(o), \frac{1}{c}\right)$
\end{proof}
%In other words, \cref{scalingconstantboththeorem} says that, up to a multiplicative constant, $\longestcurv_\rho^F$ behaves like the drift.
\section{Linear growth}
We recall that the growth has to be at least linear.
\begin{theorem}[Azemar--Gadre--Gouëzel--Haettel--Lessa--Uyani]\label{pr:lowerboundlinear}
Given a basepoint $o\in \T(S)$ there exists some constant $c$ such that $\Drift_\mu(\rho)\ge c d_{\teich}(\rho,o)$.
\end{theorem}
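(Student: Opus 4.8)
The plan is to combine Theorem \ref{scalingconstantboththeorem} (or rather its lower bound, Theorem \ref{scalingconstanttheorem}) with a lower bound for $M_\rho^F$ in terms of the Teichmüller distance $d_{\teich}(\rho, o)$. By Theorem \ref{scalingconstanttheorem} there is a constant $c_0 > 0$ with $\Drift_\mu(\rho) \ge c_0 M_\rho^F$ for all $\rho \in \T(S)$, so it suffices to produce a constant $c_1 = c_1(o, F) > 0$ such that $M_\rho^F \ge c_1 d_{\teich}(\rho, o) - O(1)$; absorbing the additive error and adjusting the constant (using that $\Drift_\mu(\rho)$ and $d_{\teich}(\rho,o)$ are both controlled near $o$) then gives the claim with $c = c_0 c_1$ after shrinking. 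So the whole problem reduces to: the maximal hyperbolic length of a fixed finite filling set grows at least linearly in the Teichmüller distance from a fixed basepoint.

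The key step is this length lower bound, and the natural tool is Wolpert's lemma: for any essential curve $\gamma$ and any two points $X, Y \in \T(S)$ one has $\hyp_Y(\gamma) \le e^{2 d_{\teich}(X,Y)} \hyp_X(\gamma)$, and symmetrically $\hyp_Y(\gamma) \ge e^{-2 d_{\teich}(X,Y)} \hyp_X(\gamma)$. This alone is not enough because it only says \emph{some} curve could grow; I need that the \emph{particular} finite set $F$ has a curve that grows. Here I would invoke the companion upper bound already established in this paper, \cref{le:upperboundlength} (with the roles of $\rho$ and $o$ reversed, which is legitimate since $F$ is filling and the lemma applies to any basepoint): there is $K$ with $\hyp_o(\gamma) \le K M_o^F \, \hyp_\rho(\gamma)$ for every curve $\gamma$, hence $\hyp_\rho(\gamma) \ge \tfrac{1}{K M_o^F} \hyp_o(\gamma)$ — wait, this bounds lengths at $\rho$ from below by lengths at $o$, not by the distance. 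The right move is instead: pick the curve $\gamma_0$ that is \emph{shortest at $\rho$} among a (suitable, possibly larger fixed) generating collection, use that a shortest curve at $\rho$ has length bounded by the Bers constant, and track how $d_{\teich}(\rho,o)$ forces lengths in $F$ to be large. Concretely, by Wolpert, $\max_{\gamma \in F} \hyp_\rho(\gamma) \ge e^{-2 d_{\teich}(\rho,o)} \max_{\gamma\in F}\hyp_o(\gamma)$ is useless (wrong direction), but $\max_{\gamma\in F}\hyp_\rho(\gamma) \ge e^{2d_{\teich}(\rho,o)} \cdot \big(\text{something that shrinks}\big)$ — the correct formulation is that there exists a curve (coming from a short curve at $o$ transported forward) whose length at $\rho$ is at least $e^{2d_{\teich}(\rho,o)}$ times a constant, and since $F$ fills, \cref{le:upperboundlength} lets me bound that curve's $\rho$-length by $K M_\rho^F \hyp_o(\gamma)$, yielding $M_\rho^F \ge \tfrac{1}{K \hyp_o(\gamma)} e^{2 d_{\teich}(\rho,o)}\hyp_\rho(\gamma_{\min}^o)$ — an exponential, hence certainly linear, lower bound.

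So the argument runs: (1) fix $o$ and let $\delta > 0$ be the minimal hyperbolic length of the systole at $o$; (2) let $\rho \in \T(S)$ and let $\gamma_\rho$ be a systole at $\rho$, of length $\le L$ (Bers constant for the fixed surface); (3) by Wolpert applied between $o$ and $\rho$, $\hyp_o(\gamma_\rho) \le e^{2 d_{\teich}(\rho,o)} \hyp_\rho(\gamma_\rho) \le L e^{2 d_{\teich}(\rho,o)}$, and also $\hyp_o(\gamma_\rho) \ge \delta$; (4) apply \cref{le:upperboundlength} with basepoint $o$ to the curve $\gamma_\rho$: $\hyp_\rho(\gamma_\rho) \le K M_\rho^F \hyp_o(\gamma_\rho)$, i.e. $M_\rho^F \ge \hyp_\rho(\gamma_\rho)/(K \hyp_o(\gamma_\rho))$. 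This still needs the denominator controlled; since $\hyp_o(\gamma_\rho)$ could be large this is the sticking point, and the clean fix is to instead run \cref{le:upperboundlength} in the \emph{other} direction — bound $\hyp_\rho$ of a curve $\gamma$ that is short at $o$: take $\gamma$ with $\hyp_o(\gamma) = \delta$; by Wolpert its $\rho$-length satisfies $\hyp_\rho(\gamma) \ge \delta e^{-2 d_{\teich}(\rho,o)}$ (again wrong direction) — **the genuinely correct and simplest route** is: $d_{\teich}(\rho,o) \le \tfrac12 \log \max_{\gamma \in F} \frac{\hyp_\rho(\gamma)}{\hyp_o(\gamma)} + C$ is false in general, so I will instead use the \emph{lower bound} form of Wolpert together with the fact that some $F$-curve must be stretched: since $F$ fills, $d_{\teich}(\rho,o) \le \tfrac12 \log\big(K'\, \max_{\gamma\in F}\hyp_\rho(\gamma)\big) + C'$ via the standard comparison of Teichmüller distance with a finite filling family (this is precisely a consequence of \cref{le:upperboundlength} combined with Wolpert, and is the main technical obstacle to state cleanly). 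Exponentiating gives $\max_{\gamma \in F}\hyp_\rho(\gamma) \ge \tfrac{1}{K'} e^{2(d_{\teich}(\rho,o) - C')}$, i.e. $M_\rho^F$ grows at least exponentially in $d_{\teich}(\rho,o)$, which is more than linear; feeding this into Theorem \ref{scalingconstanttheorem} gives $\Drift_\mu(\rho) \ge c_0 M_\rho^F \ge c \, d_{\teich}(\rho,o)$ for a suitable $c > 0$ (the exponential trivially dominates the linear bound once $d_{\teich}(\rho,o)$ is bounded below, and for $\rho$ near $o$ the bound is vacuous or handled by continuity).

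The main obstacle, then, is step (4): cleanly establishing that a fixed finite filling set $F$ satisfies $d_{\teich}(\rho, o) \le \tfrac12\log M_\rho^F + O(1)$. I would prove this by noting that a quasiconformal map from $X_o$ to $X_\rho$ can be built with dilatation controlled by how much the $F$-curves are stretched — or, more economically, cite that the map $\rho \mapsto (\hyp_\rho(\gamma))_{\gamma \in F}$ is proper and its relation to $d_{\teich}$ follows from Wolpert's lemma applied to a curve realizing the systole at $\rho$ together with \cref{le:upperboundlength}. Since the theorem only claims a \emph{linear} lower bound, I have enormous slack: any proper growth estimate suffices, and the cleanest writeup simply chains Theorem \ref{scalingconstanttheorem}, \cref{le:upperboundlength}, and Wolpert's inequality, checking that the composite inequality is at worst linear — indeed it is exponential — so the stated linear bound follows a fortiori.
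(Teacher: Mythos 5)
First, a point of comparison: the paper does not prove this statement at all --- it is imported wholesale from \cite[Theorem B]{DriftProper} --- so there is no internal proof to measure you against. Your opening reduction is nonetheless the right one and is what any proof in this framework must do: by \cref{scalingconstanttheorem} it suffices to show that $M_\rho^F \ge c_1\, d_{\teich}(\rho,o) - C$ for a fixed finite filling set $F$.

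However, the one concrete inequality you ultimately commit to for that step is false. You assert $d_{\teich}(\rho,o)\le \tfrac12\log\bigl(K' M_\rho^F\bigr)+C'$, equivalently $M_\rho^F \ge \tfrac{1}{K'}e^{2(d_{\teich}(\rho,o)-C')}$. This contradicts \cref{th:MasurStrebelBound} (and hence \cref{th:linearalongJenkin}) of this very paper: along a Jenkins--Strebel ray the hyperbolic length of every curve, in particular every curve of $F$, grows only \emph{linearly} in $t=d_{\teich}(o,\tray{q}{t})$, so $M^F_{\tray{q}{t}}$ cannot be bounded below by an exponential of the distance. The exponential comparison between distance and length ratios of a filling family is a thick-part phenomenon (\cref{choikasrauniformbound}) and fails once the geodesic dives into the thin part, which Jenkins--Strebel geodesics do. Your repeated observation that Wolpert's lemma points ``the wrong direction'' is accurate --- it only bounds $\rho$-lengths from above by $e^{2d}$ times $o$-lengths and yields no lower bound on $\max_{\gamma\in F}\hyp_\rho(\gamma)$ --- but you never replace it with a working tool. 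The missing ingredient is an extremal-length argument: Kerckhoff's formula $d_{\teich}(o,\rho)=\tfrac12\log\sup_\alpha \ext_\rho(\alpha)/\ext_o(\alpha)$ produces a curve whose extremal-length ratio is $e^{2d}$, one transfers this to the filling set via Minsky-type inequalities relating extremal length and intersection numbers, and then Maskit's comparison $\ext_\rho(\gamma)\le \tfrac12 \hyp_\rho(\gamma)e^{\hyp_\rho(\gamma)/2}$ is precisely what converts the exponential $e^{2d}$ into a \emph{linear} lower bound $M_\rho^F\gtrsim d_{\teich}(\rho,o)$; this is why the true bound is linear and not exponential. Your attempted substitute via \cref{le:upperboundlength} leaves an uncontrolled factor of $\hyp_o(\gamma_0)$ in the denominator (which can itself be of order $d$), a difficulty you flag as ``the sticking point'' but do not resolve. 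As written, the proposal does not contain a proof of the key length estimate, and the bound it does assert is false.
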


In this section we prove that this bound is sharp. The main ingredient is the following result, established by Masur \cite[End of the proof of Theorem 1.1]{MasurTwo}, which finds limiting values for hyperbolic lengths along Jenkins--Strebel quadratic differentials.
\begin{theorem}[Masur]\label{th:MasurStrebelBound}
    Let $q$ be a unit area Jenkins--Strebel quadratic differential and let $\alpha_1,\ldots,\alpha_k$ be its core curves. Then, for any sequence $(\rho_n)\subset \T(S)$ converging to $q$ in the visual compactification and any curve $\gamma$ in $S$ we have
    \[
        \lim_{n\to \infty} \frac{\hyp_{\rho_n}(\gamma)}{4d_{\teich}(o,\rho_n)} = \sum_{i=1}^k i(\alpha_i,\gamma).
    \]
\end{theorem}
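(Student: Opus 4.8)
The plan is to reduce the statement to a direct computation on the flat cylinders defined by $q$, to compare the flat and hyperbolic metrics through the collar lemma, and then to upgrade the ray computation to an arbitrary visual sequence. I would first treat the model case $\rho_n=\tray{q}{t_n}$, where the factor $4$ and the sum of intersection numbers already appear, and then explain how to handle a general sequence converging to $q$. The first step is the cylinder decomposition. Since $q$ is Jenkins--Strebel, the complement of its critical graph is a finite disjoint union of flat cylinders $C_1,\dots,C_k$ with core curves $\alpha_1,\dots,\alpha_k$; write $H_i$ for the height of $C_i$ (its transverse measure $i(\alpha_i,H(q))$ against the horizontal foliation $H(q)$) and $L_i$ for the flat length of $\alpha_i$, so that $\sum_i H_iL_i=1$. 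Along the ray the Teichmüller map stretches the horizontal direction by $e^t$ and contracts the vertical by $e^{-t}$, and since the cores are vertical trajectories, at time $t$ the cylinder $C_i$ has circumference $e^{-t}L_i$, height $e^tH_i$, and hence modulus $M_i(t)=e^{2t}H_i/L_i\to\infty$. This single computation produces the exponent, and thus the factor $4$.

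Next I would pass from moduli to hyperbolic lengths along the ray. As $M_i(t)\to\infty$ the core $\alpha_i$ pinches, and the standard relation between the modulus of a maximal embedded cylinder and the hyperbolic length of its core gives $\hyp_{\tray{q}{t}}(\alpha_i)=\pi/M_i(t)+o(1/M_i(t))=\pi e^{-2t}(L_i/H_i)(1+o(1))$. The collar lemma then produces an embedded hyperbolic collar around each $\alpha_i$ whose full width is $2\log(1/\hyp_{\tray{q}{t}}(\alpha_i))+O(1)=4t+O(1)$. Putting $\gamma$ in minimal position with the $\alpha_i$, every essential crossing of $C_i$ forces $\gamma$ to traverse this collar, so $\hyp_{\tray{q}{t}}(\gamma)\ge\sum_i i(\alpha_i,\gamma)\,(4t+O(1))$. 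For the matching upper bound I would build a comparison curve that crosses each collar along a geodesic transversal and runs through the complementary region, which has uniformly bounded hyperbolic diameter, so its contribution is $O(1)$ per crossing. Dividing by $4t=4\,d_{\teich}(o,\tray{q}{t})$ yields the claimed limit on the ray.

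The genuinely delicate part is to pass from the ray to an arbitrary sequence $\rho_n\to q$ in the visual compactification, that is $\rho_n=\tray{q_n}{t_n}$ with $q_n\to q$ in $Q^1(o)$ and $t_n\to\infty$. The plan is to control the thin part of $\rho_n$ through extremal length, which is continuous on $\T(S)$ and, along each geodesic, is governed by Kerckhoff's formula in terms of intersection numbers with the vertical and horizontal foliations. Using this I would show that for large $n$ the only short curves of $\rho_n$ are the cores $\alpha_i$, with $\ext_{\rho_n}(\alpha_i)\to0$ at the rate $e^{-2t_n}$ dictated by the moduli of $q$, while $\ext_{\rho_n}(\gamma)$ for any transversal grows like $e^{2t_n}\,i(\gamma,V(q_n))^2$. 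Minsky's product-region description of the thin part, together with the pinching comparison $\hyp_{\rho_n}(\alpha_i)=\pi\,\ext_{\rho_n}(\alpha_i)(1+o(1))$, then reduces $\hyp_{\rho_n}(\gamma)$ to a sum of collar-crossing widths exactly as on the ray, the weights $H_i,L_i$ surviving only inside the additive $O(1)$ terms.

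I expect this last uniformity to be the main obstacle. Because $q_n$ need not be Jenkins--Strebel, its cylinder decomposition is not literally defined, and one must show that the flat, and hence hyperbolic, thin geometry of $\rho_n$ is modeled on the cylinders of $q$ \emph{uniformly} as the direction converges to $q$ while the distance diverges. Concretely, the crux is to establish that the degenerating annuli of $\rho_n$ have moduli $M_i(\rho_n)=e^{2t_n}(H_i/L_i)(1+o(1))$ and that no spurious curve becomes comparably short in this joint limit. I would isolate this as a separate lemma controlling the thin part of $\rho_n$ via continuity of extremal length and the convergence $V(q_n)\to V(q)$, after which the ray computation of the preceding two paragraphs applies verbatim.
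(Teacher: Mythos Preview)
The paper does not contain a proof of this statement: it is quoted as a result of Masur, with the attribution ``established by Masur \cite[End of the proof of Theorem 1.1]{MasurTwo}'', and is used as a black box in the proof of \cref{th:linearalongJenkin}. There is therefore no in-paper argument to compare your proposal against.

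That said, your outline for the ray case $\rho_n=\tray{q}{t_n}$ is the standard one and is essentially how Masur's argument runs: the cylinder moduli grow like $e^{2t}$, the cores pinch with $\hyp(\alpha_i)\sim \pi/M_i(t)$, and the collar lemma converts this into crossing widths $4t+O(1)$, while the complementary thick pieces contribute only $O(1)$. This already suffices for the only use the paper makes of the theorem, namely bounding $\hyp_{\tray{q}{t}}(\gamma_j)$ along the fixed ray in the proof of \cref{th:linearalongJenkin}.

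Your third and fourth paragraphs, handling a general visual sequence $\rho_n=\tray{q_n}{t_n}$ with $q_n\to q$, are honest about where the difficulty lies, but as written they are a plan rather than a proof. The key unproven claim is the uniformity statement that the only curves becoming short on $\rho_n$ are the $\alpha_i$, with moduli $e^{2t_n}(H_i/L_i)(1+o(1))$, \emph{uniformly} in the joint limit $q_n\to q$, $t_n\to\infty$. Continuity of extremal length on $\T(S)$ is a pointwise statement and does not by itself give this; one genuinely needs a quantitative comparison of the flat geometries of $q_n$ and $q$ (e.g.\ that nearby quadratic differentials have nearby cylinder decompositions in a suitable sense, or an argument via Kerckhoff's formula together with uniform control of $i(\gamma,V(q_n))$ and $i(\gamma,H(q_n))$). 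If you want a self-contained proof of the full visual-compactification statement, this lemma is where the real work is, and it should be stated and proved carefully rather than asserted.
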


\begin{proof}[Proof of \cref{th:linearalongJenkin}]
By \cref{pr:lowerboundlinear} we only have to prove the upper bound.
Let $\alpha_1,\ldots, \alpha_k$ be the core curves of the vertical foliation of $q$ and let $\gamma_1,\ldots,\gamma_l$ be the curves within the filling set $F$. Let $\delta>0$. Then, by \cref{th:MasurStrebelBound} for each $\gamma_j$ we have a $t_j$ such that, for all $t>t_j$, we have 
\[\hyp_{\tray{q}{t}}(\gamma_j)<4\left(\sum_{i=1}^k i(\alpha_i,\gamma_j)+\delta\right)t.\] 
Hence we can take $C(q,j)$ big enough so $\hyp_{\tray{q}{t}}(\gamma_j)< C(q,j) t$ for all $t\ge 1$. The theorem follows by setting $C(q)=K\max_{j\le k}C(q,j)$, where $K$ is the constant given by \cref{scalingconstantboththeorem}.
\end{proof}

\section{Exponential growth}
The goal of this section is to prove that the standard behaviour of the drift is exponential growth with respect to the \teichmuller distance. We begin by observing that as a direct result of Wolpert's Lemma, the growth can not be higher than exponential.
\begin{proposition}\label{pr:upperuniformbound}
Given a basepoint $o\in \T(S)$, there exists some constant $C(o)$ such that $\Drift_\mu(\rho)\le \Drift_\mu(o)e^{2d_{\teich}(\rho,o)}$.
\end{proposition}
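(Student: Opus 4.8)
The plan is to bound the drift in terms of lengths at the basepoint $o$ using Wolpert's Lemma, which states that under a $K$-quasiconformal map between two points of $\T(S)$ the hyperbolic length of any curve changes by at most a factor of $K$; equivalently, if $d_{\teich}(\rho,o)=t$ then $K=e^{2t}$ and $\hyp_\rho(\gamma)\le e^{2t}\hyp_o(\gamma)$ for every curve $\gamma$. First I would recall this statement explicitly. The key point is that this inequality holds for \emph{every} isotopy class of essential curve simultaneously, with the same multiplicative constant $e^{2t}$ depending only on the \teichmuller distance between $\rho$ and $o$.

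Next I would apply this to the random walk. For each $n$, the element $Z_n\in\Gamma=\pi_1(S,p)$ determines a free homotopy class of curve in $S$, and $|Z_n|_\rho=\hyp_\rho(Z_n)$ under the identification of $\T(S)$ with conjugacy classes of discrete faithful representations explained in \cref{se:background}. Hence Wolpert's Lemma gives
\[
|Z_n|_\rho \le e^{2d_{\teich}(\rho,o)}\,|Z_n|_o
\]
almost surely, for every $n$. Dividing by $n$ and letting $n\to\infty$, the left side converges to $\Drift_\mu(\rho)$ and the right side to $e^{2d_{\teich}(\rho,o)}\Drift_\mu(o)$, both limits existing almost surely by Kingman's subadditive ergodic theorem as noted in the background section. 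This yields
\[
\Drift_\mu(\rho)\le \Drift_\mu(o)\,e^{2d_{\teich}(\rho,o)},
\]
which is the claimed bound; the constant $C(o)$ in the statement is simply absorbed into this estimate (indeed one can take $C(o)=\Drift_\mu(o)$, or carry it as a harmless multiplicative slack).

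There is no real obstacle here; the proof is essentially a one-line consequence of Wolpert's Lemma combined with the definition of the drift as a limit. The only point requiring a modicum of care is that the inequality $|Z_n|_\rho\le e^{2d_{\teich}(\rho,o)}|Z_n|_o$ must hold for all $n$ along the \emph{same} sample path before passing to the limit — this is immediate since the bound is deterministic in $Z_n$ and uniform over all curves — and that both drifts are genuine almost-sure limits, which was already established. One could alternatively invoke \cref{le:upperboundlength} in place of Wolpert's Lemma, but that would give a bound in terms of $M^F_\rho$ rather than the clean exponential in $d_{\teich}(\rho,o)$, so Wolpert's Lemma is the natural tool for this particular statement.
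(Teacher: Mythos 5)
Your proposal is correct and follows exactly the paper's own argument: apply Wolpert's Lemma $\hyp_\rho(\gamma)\le e^{2d_{\teich}(o,\rho)}\hyp_o(\gamma)$ to the curves $Z_n$, divide by $n$, and pass to the limit defining the drift. No meaningful differences.
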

\begin{proof}
    As proven by Wolpert \cite[Lema 3.1]{WolpertLemma} for any two points $o,\rho \in \T(S)$ and loop $\gamma$ we have
    \[
        \hyp_\rho(\gamma)\le e^{2d_{\teich}(o,\rho)}\hyp_o(\gamma).
    \]
    Therefore,
    \[
        \Drift_\mu(\rho)= \lim_{n\to \infty} \frac{\hyp_\rho(Z_n)}{n}\le e^{2d_{\teich}(o,\rho)}\lim_{n\to \infty} \frac{\hyp_o(Z_n)}{n}= e^{2d_{\teich}(o,\rho)}\Drift_\mu(o).
    \]
\end{proof}

The following result by Choi--Rafi \cite[Theorem B]{ChoiRafiComparison} allows us to improve the previous upper bound, as well as get a lower bound for the growth of the drift for points in the thick part of \teichmuller space. Recall that $\T_\epsilon(S)$ denotes the $\epsilon$-thick part of \teichmuller space.
 \begin{theorem}[Choi--Rafi]\label{choikasrauniformbound}
Fix $\epsilon>0$ and $o\in \T_\epsilon(S)$. There is a finite filling set of closed curves $G$ and a constant $D>0$ such that, for any $\rho\in T_\epsilon(S)$ we have
\[
\left|d_{\teich}(\rho,o)-\log\left(\max_{\alpha \in G} \frac{\hyp_\rho(\alpha)}{\hyp_o(\alpha)}\right)\right|\le D.
\]
 \end{theorem}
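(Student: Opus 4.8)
\emph{Approach.} Write $R:=\max_{\alpha\in G}\hyp_\rho(\alpha)/\hyp_o(\alpha)$. The plan is to prove the two inequalities $\log R\le d_{\teich}(\rho,o)+D$ and $d_{\teich}(\rho,o)\le \log R+D$ separately, in fact for an \emph{arbitrary} finite filling set $G$ (with only $D$ depending on $G$, $o$ and $\epsilon$). Three ingredients feed in: (i) Kerckhoff's formula $d_{\teich}(\rho,o)=\tfrac12\log\sup_\gamma \big(\ext_\rho(\gamma)/\ext_o(\gamma)\big)$, the supremum over essential simple closed curves, with $\ext$ the extremal length; (ii) the comparison of extremal and hyperbolic length on the thick part --- there are constants $0<c_\epsilon\le C_\epsilon$, depending only on $\epsilon$ and the topology of $S$, with $c_\epsilon\,\hyp_X(\gamma)^2\le \ext_X(\gamma)\le C_\epsilon\,\hyp_X(\gamma)^2$ for all $X\in\T_\epsilon(S)$ and all $\gamma$; and (iii) \cref{le:upperboundlength}, which for $F=G$ reads $\hyp_\rho(\gamma)\le K\,M^G_\rho\,\hyp_o(\gamma)$ and lets one trade the supremum over all curves for the maximum over $G$.

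\emph{The two bounds.} Given these, the lower bound is immediate: for any single $\alpha\in G$, (i) and the lower estimate in (ii) give $d_{\teich}(\rho,o)\ge\tfrac12\log\big(\ext_\rho(\alpha)/\ext_o(\alpha)\big)\ge \log\big(\hyp_\rho(\alpha)/\hyp_o(\alpha)\big)-\tfrac12\log(C_\epsilon/c_\epsilon)$, and maximizing over $\alpha\in G$ yields $\log R-\tfrac12\log(C_\epsilon/c_\epsilon)$; this direction uses thickness of $o$ but not of $\rho$, since $\ext_X(\gamma)\ge c_\epsilon\hyp_X(\gamma)^2$ holds on all of $\T(S)$. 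For the upper bound, fix any $\gamma$: thickness of $\rho$ gives $\ext_\rho(\gamma)\le C_\epsilon\hyp_\rho(\gamma)^2$, thickness of $o$ gives $\ext_o(\gamma)\ge c_\epsilon\hyp_o(\gamma)^2$, and \cref{le:upperboundlength} gives $\hyp_\rho(\gamma)/\hyp_o(\gamma)\le K\,M^G_\rho\le K\,M^G_o\,R$; substituting into (i),
\[
d_{\teich}(\rho,o)=\tfrac12\log\sup_\gamma\frac{\ext_\rho(\gamma)}{\ext_o(\gamma)}\ \le\ \log R+\tfrac12\log\!\big(\tfrac{C_\epsilon}{c_\epsilon}K^2(M^G_o)^2\big).
\]
Choosing $D$ to be a positive constant at least as large as both additive errors finishes the proof.

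\emph{Main obstacle.} The real content is (ii), and it is where thickness of \emph{both} $o$ and $\rho$ is used. The inequality $\ext_X(\gamma)\ge \hyp_X(\gamma)^2/(2\pi|\chi(S)|)$ is trivial, using the hyperbolic metric of $X$ itself as a test metric in the definition of extremal length. The matching upper bound $\ext_X(\gamma)\le C_\epsilon\hyp_X(\gamma)^2$ is the delicate half: it amounts to exhibiting, for every $\gamma$, an embedded annulus homotopic to $\gamma$ of modulus at least $c'_\epsilon\,\hyp_X(\gamma)^{-2}$. For $\gamma$ of bounded hyperbolic length this is essentially Maskit's inequality, the collar of $\gamma$ already serving; but for long $\gamma$ the collar is exponentially thin, and I would instead build the annulus by hand from a bounded-length pants decomposition of $X$ (available because $X$ is thick), thickening each arc in which $\gamma$ crosses a pair of pants and gluing the resulting flat strips along $\gamma$ --- since $\hyp_X(\gamma)$ is comparable to the number of such arcs, the glued annulus has the required modulus. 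This is precisely what breaks down in the thin part, where a curve crossing a short geodesic many times has extremal length far exceeding the square of its hyperbolic length. (An alternative route, bypassing Kerckhoff's formula in the upper bound, is to build directly an efficient quasiconformal map $X_o\to X_\rho$ along a refinement of $G$, using that a hyperbolic pair of pants has fixed area, so that stretching a boundary curve by order $R$ forces compressing the transverse direction by order $R$, giving dilatation of order $R^2$ and hence $d_{\teich}(\rho,o)\le\log R+O(1)$; controlling the shapes of the pieces uniformly over the thick part is then the crux, the same difficulty in another guise.)
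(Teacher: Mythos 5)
The paper does not actually prove this statement: it is imported as Theorem B of Choi--Rafi \cite{ChoiRafiComparison} and used as a black box, so there is no in-paper argument to compare yours against. Judged on its own merits, your outline is correct and is essentially the route the cited source takes: Kerckhoff's formula, the thick-part comparison between extremal and hyperbolic length, and an argument to replace the supremum over all simple closed curves by the maximum over the filling set $G$ (your use of \cref{le:upperboundlength} together with $M^G_\rho\le M^G_o R$ for this last step is clean, and your bookkeeping of which inequality needs thickness of which point is right).

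The one genuine soft spot is the one you flag yourself: the upper bound $\ext_X(\gamma)\le C_\epsilon\,\hyp_X(\gamma)^2$ for $X\in\T_\epsilon(S)$. This is a true and standard estimate (Rafi's $\sqrt{\ext_X(\gamma)}\asymp\hyp_X(\gamma)\asymp i(\gamma,\mu_X)$ on the thick part, $\mu_X$ a short marking), and your serial-rule mechanism --- $N$ corridors of bounded length and width $\asymp 1/N$ chained into an annulus of modulus $\gtrsim 1/N^2$ --- is the right one. But as written the construction cuts only along a pants decomposition, and the number of arcs in which $\gamma$ meets the pants curves is \emph{not} comparable to $\hyp_X(\gamma)$: an arc that spirals $k$ times about a cuff has length $\gtrsim k\epsilon$ while crossing the decomposition only at its two endpoints, so with heavy twisting the corridors are not of bounded length and the modulus count fails. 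You need to cut along a short \emph{marking} (pants curves plus transversals) or a bounded-geometry triangulation, so that the number of pieces really is $\asymp\hyp_X(\gamma)$ and each piece has uniformly bounded diameter; with that correction the estimate goes through. Two smaller points: the lower bound requires the curves of $G$ to be simple so that extremal length and Kerckhoff's formula apply to them (a filling set of simple closed curves always exists, but say so), and the universal inequality you call trivial should read $\ext_X(\gamma)\ge\hyp_X(\gamma)^2/(2\pi\abs{\chi(S)})$ via the length--area definition, which is what you intend.
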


\begin{proposition} \label{pr:uniformboundthickpart}
Let $\epsilon>0$ and let $o\in \T_\epsilon(S)$. Then, there exists constants $c,C>0$ such that for any $\rho\in \T_\epsilon(S)$ we have 
\[
c e^{d_{\teich}(\rho,o)} \le \Drift_\mu(\rho) \le C e^{d_{\teich}(\rho,o)}.
\]
\end{proposition}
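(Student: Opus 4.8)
The plan is to combine the two comparison results already available: \cref{scalingconstantboththeorem}, which pins the drift between constant multiples of the longest curve in a fixed finite filling set, and \cref{choikasrauniformbound} of Choi--Rafi, which pins the \teichmuller distance between constant multiples of the logarithm of a suitable ratio of hyperbolic lengths on the thick part. The only point requiring attention is to run \cref{scalingconstantboththeorem} with \emph{the same} filling set that Choi--Rafi produce.

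Concretely, I would first let $G$ be the finite filling set of closed curves and $D>0$ the constant furnished by \cref{choikasrauniformbound} for the fixed basepoint $o\in\T_\epsilon(S)$. Since $G$ is itself a finite filling set of curves in $S$, \cref{scalingconstantboththeorem} applies to it (after the same harmless relaxation of $\mu$ used in \cref{se:background}, so that $\supp(\mu)\supseteq G$, which only rescales the drift by a constant): there is $K<\infty$ so that $\tfrac1K\,\longestcurv_\rho^G\le\Drift_\mu(\rho)\le K\,\longestcurv_\rho^G$ for all $\rho$, where $\longestcurv_\rho^G=\max_{\alpha\in G}\hyp_\rho(\alpha)$. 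Next, since $o$ is fixed and $G$ is finite, $a\coloneqq\min_{\alpha\in G}\hyp_o(\alpha)$ and $b\coloneqq\max_{\alpha\in G}\hyp_o(\alpha)$ are positive constants, and comparing the two maxima termwise gives
\[
\frac1b\,\longestcurv_\rho^G\;\le\;\max_{\alpha\in G}\frac{\hyp_\rho(\alpha)}{\hyp_o(\alpha)}\;\le\;\frac1a\,\longestcurv_\rho^G .
\]
Feeding this into \cref{choikasrauniformbound}, which for $\rho\in\T_\epsilon(S)$ gives $e^{-D}e^{d_{\teich}(\rho,o)}\le\max_{\alpha\in G}\hyp_\rho(\alpha)/\hyp_o(\alpha)\le e^{D}e^{d_{\teich}(\rho,o)}$, shows that $\longestcurv_\rho^G$ is comparable to $e^{d_{\teich}(\rho,o)}$ up to the fixed constants $a$, $b$, $e^{\pm D}$. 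Combining this with \cref{scalingconstantboththeorem} then yields the proposition, for instance with $c=ae^{-D}/K$ and $C=Kbe^{D}$.

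I do not expect a genuine obstacle here: essentially all of the work is in the cited results, and what remains is bookkeeping. The two things to be careful about are: invoking \cref{scalingconstantboththeorem} for the Choi--Rafi filling set $G$ rather than for whatever filling set was fixed earlier (which is legitimate, since that theorem holds for an arbitrary finite filling set), and observing that passing from $\max_\alpha\hyp_\rho(\alpha)$ to $\max_\alpha\hyp_\rho(\alpha)/\hyp_o(\alpha)$ costs only the fixed constants $a$ and $b$ precisely because the basepoint $o$ is held fixed throughout. It is also worth remarking that \cref{pr:upperuniformbound} already gives an upper bound of this shape on all of $\T(S)$, but with the cruder exponent $2$; the role of the thickness hypothesis here is both to sharpen the exponent to $1$ and to provide the matching lower bound.
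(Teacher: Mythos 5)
Your proposal is correct and follows essentially the same route as the paper: both apply \cref{scalingconstantboththeorem} to the Choi--Rafi filling set $G$, compare $\max_{\alpha\in G}\hyp_\rho(\alpha)$ to $\max_{\alpha\in G}\hyp_\rho(\alpha)/\hyp_o(\alpha)$ via $\min_{\alpha\in G}\hyp_o(\alpha)$ and $\max_{\alpha\in G}\hyp_o(\alpha)$, and then invoke \cref{choikasrauniformbound}, arriving at the identical constants $c=\min_{\alpha\in G}\hyp_o(\alpha)e^{-D}/K$ and $C=K\max_{\alpha\in G}\hyp_o(\alpha)e^{D}$.
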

\begin{proof}
Let $G$ be the filling set of curves from \cref{choikasrauniformbound}. For any curve $\gamma\in G$ we have
\[
\hyp_\rho(\gamma)=\hyp_o(\gamma) \frac{\hyp_\rho(\gamma)}{\hyp_o(\gamma)} \le  \hyp_o(\gamma)\max_{\alpha \in G} \frac{\hyp_\rho(\alpha)}{\hyp_o(\alpha)}.
\]
Then, taking the maximum among all $\gamma \in G$ in the previous inequality we have, by \cref{scalingconstantboththeorem}, some $K>0$ such that
\[
\Drift_\mu(\rho)\le K \max_{\alpha\in G}{\hyp_\rho(\alpha)}\le
K\max_{\alpha\in G}\hyp_o(\alpha) e^{D} e^{d_{\teich}(\rho,o)},
\]
where the last inequality follows from applying \cref{choikasrauniformbound}. On the other hand,
\[
\frac{\max_{\alpha\in G}{\hyp_\rho(\alpha)}}{\min_{\alpha\in G}\hyp_o(\alpha) } \ge \max_{\alpha \in G} \frac{\hyp_\rho(\alpha)}{\hyp_o(\alpha)},
\]
so similarly we have
\[
\Drift_\mu(\rho)\ge \frac{1}{K} \max_{\alpha\in G}{\hyp_\rho(\alpha)}\ge
\frac{1}{K}\min_{\alpha\in G}\hyp_o(\alpha) e^{-D} e^{d_{\teich}(\rho,o)}.
\]
Hence, the result follows from setting $c=\frac{1}{K}\min_{\alpha\in G}(\hyp_o(\alpha)) e^{-D}$ and $C=K\max_{\alpha\in G}(\hyp_o(\alpha)) e^{D}$.
\end{proof}
By Mumford's compactness the preimage of every bounded subset of moduli space is contained in some $\epsilon$-thick part of \teichmuller space for $\epsilon$ small enough. Hence we have the following result.
\begin{corollary}
Let $\gamma:[0,\infty)\to \T(S)$ be a \teichmuller ray such that its image is bounded in moduli space. Then, there is $c,C>0$ such that
\[
c e^t \le \Drift_\mu(\gamma(t))\le C e^t.
\]
\end{corollary}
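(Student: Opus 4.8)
The plan is to deduce this directly from \cref{pr:uniformboundthickpart}, using that ``bounded in moduli space'' translates into ``contained in a fixed thick part of \teichmuller space'', which is exactly the content of Mumford's compactness theorem recalled just above the statement.

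Concretely, first I would use that the image of $\gamma$ is bounded in moduli space to fix, via Mumford compactness, an $\epsilon>0$ with $\gamma(t)\in\T_\epsilon(S)$ for all $t\ge 0$; since the $\epsilon$-thick part is a mapping-class-group-invariant subset (thickness being a property of the underlying hyperbolic metric), this containment holds for the ray itself and not merely up to the action. In particular the basepoint $o\coloneqq\gamma(0)$ lies in $\T_\epsilon(S)$. Then I would apply \cref{pr:uniformboundthickpart} with this $\epsilon$ and this $o$ to obtain constants $c,C>0$ with $c\,e^{d_{\teich}(\rho,o)}\le\Drift_\mu(\rho)\le C\,e^{d_{\teich}(\rho,o)}$ for every $\rho\in\T_\epsilon(S)$. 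Finally, evaluating at $\rho=\gamma(t)$ and using that $\gamma$ is a unit-speed \teichmuller geodesic ray, so that $d_{\teich}(\gamma(t),\gamma(0))=t$, yields $c\,e^{t}\le\Drift_\mu(\gamma(t))\le C\,e^{t}$, which is the claim.

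There is no serious obstacle here: the corollary is essentially a repackaging of \cref{pr:uniformboundthickpart}. The two points that deserve a line of justification are the reduction ``bounded in moduli space $\implies$ contained in a single $\T_\epsilon(S)$'', which is Mumford compactness (and could alternatively be derived from Wolpert's lemma, which bounds how small the systole can get at bounded \teichmuller distance from a thick basepoint), and the normalization of the ray: with a constant-speed parametrization of speed $v$ one would instead get $\Drift_\mu(\gamma(t))\asymp e^{vt}$, so the statement is to be read with the unit-speed convention fixed in \cref{se:background}. Everything else is immediate.
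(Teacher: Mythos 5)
Your argument is correct and is essentially identical to the paper's: the corollary is stated immediately after the observation that, by Mumford compactness, a ray bounded in moduli space stays in some $\T_\epsilon(S)$, and then \cref{pr:uniformboundthickpart} is applied with $o=\gamma(0)$ and $d_{\teich}(\gamma(t),o)=t$. Your additional remarks on the unit-speed normalization and the basepoint lying in the thick part are harmless elaborations of the same proof.
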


To prove that for almost all directions the growth is exponential we first observe that the growth of the drift is, up to a constant, not lost for big enough times. The main ingredient we shall use in the proof is the following.
\begin{theorem}[Lenzhen--Rafi {\cite[Theorem A]{LenzhenKasraConvexity}}]\label{quasiconvexityLenzhen}
    There exists a constant $K>0$ such that for every closed curve $\gamma$, any \teichmuller geodesic $G$ and points $x,y,z\in \T(S)$ appearing in that order along $G$, we have
    \[
        \hyp_y(\gamma)\le K \max(\hyp_x(\gamma),\hyp_z(\gamma)).
    \]
\end{theorem}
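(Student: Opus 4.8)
This is a theorem of Lenzhen--Rafi, so I will only describe the shape of a proof; the plan is to reduce the statement to the analogue for extremal length and then establish that analogue via the flat geometry of the quadratic differential along $G$. For the reduction I would use the standard comparison between hyperbolic and extremal length (Maskit): $\hyp_w(\gamma)$ and $\ext_w(\gamma)$ agree up to a universal multiplicative constant whenever either of them lies below a fixed threshold, while $\hyp_w(\gamma)\le \pi\,\ext_w(\gamma)$ holds in general (the core of a modulus-maximising embedded annulus has controlled hyperbolic length, by the Schwarz lemma). Feeding in Wolpert's inequality $\hyp_w(\gamma)\le e^{2d_{\teich}(w,w')}\hyp_{w'}(\gamma)$ to dispose of configurations in which the three points are within bounded distance, a case analysis reduces the claim to: there is a universal $K$ with $\ext_y(\gamma)\le K\max(\ext_x(\gamma),\ext_z(\gamma))$ whenever $x,y,z$ occur in that order along a \teichmuller geodesic.

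For this extremal-length statement I would follow the flat-geometry route. Let $q_t$ be the unit-area quadratic differential at $G(t)$, so that as $t$ varies the horizontal and vertical data of $q$ rescale by $e^{t}$ and $e^{-t}$, with horizontal and vertical foliations $\mathcal F^{\pm}$ appearing as the two endpoints of $G$. By Rafi's combinatorial formula, $\ext_{q_t}(\gamma)$ is comparable to the maximum of a flat term --- the square of the $|q_t|$-length of $\gamma$, which is a convex function of $t$ up to multiplicative error --- and, over the subsurfaces $Y$ that become thin somewhere along $G$, a term built from the modulus of $Y$ at time $t$ together with the subsurface projection (relative twisting) of $\gamma$ against $\mathcal F^{+}$ and $\mathcal F^{-}$ inside $Y$. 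The plan is to show each of these terms is already quasi-convex in $t$ on its own: the flat term because it is convex; and each subsurface term because $Y$ is relevant only on a window of bounded length along $G$ --- its active interval, in Rafi's sense --- with the term decaying outside that window and its behaviour inside controlled using that $\mathcal F^{\pm}$ are the endpoints of $G$ and that $\gamma$ meets $Y$ in a fixed pattern. One then assembles these: at any point only boundedly many windows are active, which is enough for a coarse maximum-of-endpoints bound to survive with a universal constant.

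The step I expect to be the main obstacle is exactly this control of the subsurface terms: showing, uniformly in $\gamma$ and in $G$, that no thin subsurface can manufacture a spike of $\ext_{q_t}(\gamma)$ at an intermediate point $y$ that is not already present, up to the constant $K$, at $x$ or $z$. The delicate case is when $y$ itself lies deep inside a thin part --- whether or not $\gamma$ is the short curve there --- or when two short curves interact; there I would separate the twisting coordinate (the $\H$-factor) from the geometry of the complementary subsurface by Minsky's product-region theorem and then recombine the estimates. With the extremal-length inequality in hand, the paper's \cref{quasiconvexitycorollary} follows by applying it to each curve of the filling set $F$ and invoking \cref{scalingconstantboththeorem}.
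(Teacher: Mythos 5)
This statement is not proved in the paper at all: it is imported verbatim as \cref{quasiconvexityLenzhen}, a black-box citation of Lenzhen--Rafi's Theorem A, and the only thing the paper does with it is deduce \cref{quasiconvexitycorollary} via \cref{scalingconstantboththeorem}. So there is no internal proof to compare your attempt against. That said, your outline is a faithful high-level account of the strategy of the cited Lenzhen--Rafi paper itself --- reduction from hyperbolic to extremal length via Maskit's comparison, Rafi's thick--thin formula for extremal length along the geodesic, convexity of the flat term, active intervals for the subsurface terms, and Minsky's product-region theorem for the thin-part analysis --- and you correctly identify the subsurface-term control as the genuinely hard step. As written it is a plan rather than a proof (that hard step is named but not carried out), but since the paper treats the theorem as known, nothing in your sketch conflicts with anything the paper does; for the purposes of this paper the honest move is simply to cite Lenzhen--Rafi as the source, as the author does.
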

By \cref{scalingconstantboththeorem} Lenzhen--Rafi's result translates directly to the drift. That is, we have \cref{quasiconvexitycorollary}.
\begin{proof}[Proof of \cref{quasiconvexitycorollary}]
    By \cref{scalingconstantboththeorem,quasiconvexityLenzhen} there are constants $C,K>0$ such that
    \[
        \Drift_\mu(y)\le C \max_{\gamma \in F} \hyp_y(\gamma) \le KC \max_{\gamma \in F} \left(\max(\hyp_x(\gamma),\hyp_z(\gamma)\right)
    \]
    Switching the order of the maximums we have   
    \[ 
       \Drift_\mu(y)\le KC \max\left(\max_{\gamma \in F} \hyp_x(\gamma), \max_{\gamma \in F} \hyp_z(\gamma)\right)\le KC^2 \max(\Drift_\mu(x),\Drift_\mu(z)).
    \]
\end{proof}

 Furthermore, the drift is a proper function, so along any \teichmuller ray $\tray{q}{\cdot}$, the value of $\Drift_\mu(\tray{q}{t})$ is eventually larger than $\Drift_\mu(\tray{q}{0}$. Hence, we have the following.
 \begin{lemma}\label{le:driftgrowth}
    There is some constant $K>0$ such that, for any basepoint $o\in \T(S)$ there is some time $t_o$ for which
    \[
        \Drift_\mu(\tray{q}{s}) \le K\Drift_\mu(\tray{q}{t})
    \]
    for any $t>t_o$, $t>s>0$ and $q\in Q^1(o)$.
 \end{lemma}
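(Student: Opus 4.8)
The plan is to combine the quasiconvexity statement of \cref{quasiconvexitycorollary} with the properness of $\Drift_\mu$ on $\T(S)$, which was established in \cite{DriftProper} (and is also an immediate consequence of \cref{scalingconstantboththeorem}, since $M_\rho^F$ is a proper function of $\rho$). The constant $K$ in the statement will be the quasiconvexity constant $K'$ from \cref{quasiconvexitycorollary}, and it does not depend on the basepoint $o$; only the threshold time $t_o$ will.

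First I would fix the basepoint $o$ and use properness to produce the threshold. Since $\Drift_\mu$ is proper, the sublevel set $\{\rho\in\T(S)\st \Drift_\mu(\rho)\le K'\Drift_\mu(o)\}$ — more precisely, the slightly larger set where $\Drift_\mu(\rho) \le \Drift_\mu(o)$ — is a bounded subset of moduli space, hence, pulling back, is contained in a ball $B$ around $o$ of some finite radius $R$ in the \teichmuller metric (here one should be slightly careful: properness is on moduli space, so one takes the $\epsilon$-thick part containing the relevant sublevel set via Mumford compactness, exactly as invoked just before the corollary in the text). Because each ray $\tray{q}{\cdot}$ is a unit-speed geodesic leaving every bounded set, for each $q$ there is a first time after which $\tray{q}{t}$ exits $B$; but I want a uniform $t_o$, so instead I argue: along a geodesic ray the function $t\mapsto \Drift_\mu(\tray{q}{t})$ cannot stay below $\Drift_\mu(o)$ for arbitrarily large $t$, and in fact once it exceeds $\Drift_\mu(o)$ it stays (up to the constant $K'$) above it, by \cref{quasiconvexitycorollary} applied to the triple $\tray{q}{0},\tray{q}{s'},\tray{q}{t}$ with $s'<t$. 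The uniformity of $t_o$ in $q$ comes from the fact that $B$ has a fixed radius $R$ and geodesics are unit speed: take $t_o = R$, so that for $t>t_o$ the point $\tray{q}{t}$ lies outside $B$, hence $\Drift_\mu(\tray{q}{t}) > \Drift_\mu(o)$.

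Now fix $t>t_o$ and $0<s<t$. If $s\le t_o$, apply \cref{quasiconvexitycorollary} to the ordered triple $x=\tray{q}{0}$, $y=\tray{q}{s}$, $z=\tray{q}{t}$ to get $\Drift_\mu(\tray{q}{s}) \le K'\max(\Drift_\mu(o),\Drift_\mu(\tray{q}{t})) = K'\Drift_\mu(\tray{q}{t})$, where the last equality uses $\Drift_\mu(\tray{q}{t})>\Drift_\mu(o)$ from the previous step. If instead $t_o<s<t$, then again by the choice of $t_o$ we have $\Drift_\mu(\tray{q}{s})>\Drift_\mu(o)$, and applying \cref{quasiconvexitycorollary} to the same triple gives $\Drift_\mu(\tray{q}{s})\le K'\max(\Drift_\mu(o),\Drift_\mu(\tray{q}{t})) = K'\Drift_\mu(\tray{q}{t})$ as well. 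In either case the desired inequality holds with $K=K'$.

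The main obstacle I anticipate is not the quasiconvexity step, which is a direct quotation, but arranging the threshold time $t_o$ to be genuinely uniform over all $q\in Q^1(o)$ while depending only on $o$: this requires phrasing properness correctly as "the set where $\Drift_\mu \le \Drift_\mu(o)$ is contained in a metric ball of radius $R=R(o)$," and then observing that unit-speed geodesic rays escape such a ball by time $R$. A minor point to check is that $\Drift_\mu(\tray{q}{t})$ does not dip back below $\Drift_\mu(o)$ after leaving the ball — but this is exactly what the containment-in-a-ball of the sublevel set guarantees, since once $\tray{q}{t}$ is outside the ball it is outside the sublevel set.
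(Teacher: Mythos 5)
Your proposal is correct and follows essentially the same route as the paper: the paper also applies \cref{quasiconvexitycorollary} to the ordered triple $\tray{q}{0},\tray{q}{s},\tray{q}{t}$ and obtains the threshold $t_o$ from the linear lower bound \cref{pr:lowerboundlinear} (which is the quantitative form of the properness you invoke), so that $\Drift_\mu(\tray{q}{t})>\Drift_\mu(o)$ forces the maximum to be attained at $\tray{q}{t}$. Your case split on $s\le t_o$ versus $s>t_o$ is unnecessary (the same triple handles both), and the brief detour through moduli space and Mumford compactness can be skipped since the sublevel-set bound follows directly from the linear growth of the drift in the \teichmuller metric, but neither point affects correctness.
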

\begin{proof}
    Let $t_o$ be big enough so $\Drift_\mu(\rho)>\Drift_\mu(o)$ for any $\rho$ such that $d_{\teich}(\rho,o)>t_o$. By \cref{pr:lowerboundlinear} such a $t_o$ exists. Then, for any quadratic differential $q$ based at $o$, $t>t_o$ and $t>s>0$ we have, by \cref{quasiconvexitycorollary},
    \[
        \Drift_\mu(\tray{q}{s})\le K\max\left(\Drift_\mu(\tray{q}{0},\Drift_\mu(\tray{q}{t}))\right)= K\Drift_\mu(\tray{q}{t})
    \]
\end{proof}

\cref{le:driftgrowth}, combined with \cref{pr:uniformboundthickpart}, gives upper and lower bound on the growth of the drift along a geodesic provided said geodesic does not spend too much continued time outside the thick part. For a given geodesic $\tray{q}{\cdot}$, $\epsilon>0$ and $t>0$, we aim to find some control on the largest $s^{bot}_q(t)\le t$ and lowest $s^{top}_q(t)\ge t$ such that $\tray{q}{s^{bot}_q(t)},\tray{q}{s^{top}_q(t)}\in \T_\epsilon(S)$. Given a geodesic ray $\gamma:\R_+\to \T(S)$, define the proportion of the amount of time spent in the thick part up to time $t$ as
\[
	\thick_\epsilon^{\%}(\gamma,t):=\frac{|\{0 \le s \le t\,:\,\gamma(s) \in\T_\epsilon(S)\}|}{t}.
\]

\begin{theorem} \label{th:exponentialgrowthprecise}
Let $o\in \T(S)$ be a basepoint in \teichmuller space. Furthermore, let $\sigma$ be a measure on $Q^1(o)$ such that for all $0<\xi<1$ there is some $\epsilon(\xi)>0$ such that for $\sigma$-almost every $q$ there is $t_q^\xi<\infty$ such that
\[
\thick_{\epsilon(\xi)}^{\%}(\tray{q}{\cdot},t)\ge \xi
\]
for all $t\ge t^\xi_q$. 

Fix then $0<\theta<1$. For $\sigma$-almost all directions $q\in Q^1(o)$ there is $t(q,\theta)<\infty$ such that
\[
e^{\theta t}< \Drift_\mu(\tray{q}{t})< e^{\frac{1}{\theta}t}
\]
for all $t>t(q,\theta)$.
\end{theorem}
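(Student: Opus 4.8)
The plan is to deduce both inequalities from the two-sided thick-part estimate \cref{pr:uniformboundthickpart} together with the quasi-monotonicity of the drift from \cref{le:driftgrowth}, using the thickness hypothesis on $\sigma$ only to bound how far along $\tray{q}{\cdot}$ one must travel in order to land in a thick part of \teichmuller space.

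First I would fix $0<\theta<1$ and choose once and for all a number $\xi=\xi(\theta)$ with $\theta<\xi<1$, for instance $\xi=(1+\theta)/2$. Let $\epsilon=\epsilon(\xi)$ be the thickness constant furnished by the hypothesis; after replacing it by a smaller positive value, which only enlarges the thick part and hence preserves the hypothesis, we may assume $\T_\epsilon(S)\neq\emptyset$, and we fix a basepoint $o'\in\T_\epsilon(S)$ together with the constants $c,C>0$ that \cref{pr:uniformboundthickpart} attaches to $o'$ and $\epsilon$. Set $d_0=d_{\teich}(o,o')$. Since $\tray{q}{\cdot}$ is a unit-speed ray issuing from $o$, the triangle inequality gives $\abs{d_{\teich}(\tray{q}{s},o')-s}\le d_0$ for every $s\ge 0$ and every $q\in Q^1(o)$.

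The next step — which I expect to be the only delicate point — is to convert the hypothesis ``$\tray{q}{\cdot}$ spends at least a proportion $\xi$ of its time in $\T_\epsilon(S)$'' into control on the nearest thick times. Fix a $q$ for which the hypothesis holds, with threshold $t_q:=t_q^\xi$, and for $t\ge t_q$ let $s^{bot}_q(t)\le t$ be the largest parameter $\le t$ with $\tray{q}{s^{bot}_q(t)}\in\T_\epsilon(S)$, and $s^{top}_q(t)\ge t$ the smallest parameter $\ge t$ with $\tray{q}{s^{top}_q(t)}\in\T_\epsilon(S)$; both exist because $\T_\epsilon(S)$ is closed and the set of thick parameters of $[0,T]$ has measure at least $\xi T>0$ whenever $T\ge t_q$. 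All thick parameters in $[0,t]$ lie in $[0,s^{bot}_q(t)]$, hence $s^{bot}_q(t)\ge\xi t$. For the upper side: if $(t,T]$ contained no thick parameter, then the thick parameters of $[0,T]$ would all lie in $[0,t]$ and so have measure at most $t$, forcing $\xi T\le t$; applying this with $T=t/\xi+1$, which is legitimate once $t$ is large enough that $t/\xi+1\ge t_q$, yields a contradiction, so $s^{top}_q(t)\le t/\xi+1$.

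Finally I would chain the estimates. For $t$ large enough to exceed the constant $t_o$ from \cref{le:driftgrowth}, that lemma followed by \cref{pr:uniformboundthickpart}, the bound $s^{bot}_q(t)\ge\xi t$ and the triangle inequality give
\[
\Drift_\mu(\tray{q}{t})\ \ge\ \tfrac1K\,\Drift_\mu(\tray{q}{s^{bot}_q(t)})\ \ge\ \tfrac cK\,e^{d_{\teich}(\tray{q}{s^{bot}_q(t)},o')}\ \ge\ \tfrac cK\,e^{\xi t-d_0},
\]
and since $\xi>\theta$ the right-hand side exceeds $e^{\theta t}$ for all sufficiently large $t$. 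Symmetrically, using \cref{le:driftgrowth} in the other direction, \cref{pr:uniformboundthickpart}, and $s^{top}_q(t)\le t/\xi+1$,
\[
\Drift_\mu(\tray{q}{t})\ \le\ K\,\Drift_\mu(\tray{q}{s^{top}_q(t)})\ \le\ KC\,e^{d_{\teich}(\tray{q}{s^{top}_q(t)},o')}\ \le\ KC\,e^{t/\xi+1+d_0},
\]
which lies below $e^{t/\theta}$ for all sufficiently large $t$ because $1/\xi<1/\theta$. Each of the finitely many ``$t$ sufficiently large'' thresholds depends only on $q$, through $t_q^\xi$, and on $\theta$, through the fixed data $\xi,\epsilon,o',c,C,K,d_0$; taking their maximum produces a finite $t(q,\theta)$ with the required property, valid for $\sigma$-almost every $q$.
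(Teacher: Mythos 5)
Your proposal is correct and follows essentially the same route as the paper: choose $\xi=(1+\theta)/2$, convert the thickness hypothesis into the bounds $s^{bot}_q(t)\ge\xi t$ and $s^{top}_q(t)\lesssim t/\xi$, and chain \cref{le:driftgrowth} with \cref{pr:uniformboundthickpart}. Your treatment is in fact slightly more careful than the paper's on two minor points: you justify the existence and finiteness of $s^{top}_q(t)$ via a contradiction at $T=t/\xi+1$, and you account for the fact that the basepoint $o$ need not lie in $\T_{\epsilon(\xi)}(S)$ by introducing $o'$ and the offset $d_0$.
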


\begin{proof}
Given $0<\theta<1$, let $\xi = \frac{1+\theta}{2}$. For a given $q\in Q^1(o)$ and $t>0$ let $s^{top}_q(t)\ge t$ be the smallest time larger than $t$ such that $\tray{q}{s^{top}_q(t)}\in \T_{\epsilon(\xi)}(S)$. The time spent outside $\T_{\epsilon(\xi)}(S)$ directly after $t$ is $s^{top}_q(t)-t$. Hence, $\frac{s^{top}_q(t)-(s^{top}_q(t)-t)}{s^{top}_q(t)}>\xi$. Therefore, $s^{top}_q(t)<\frac{1}{\xi}t$. Hence, 
 By \cref{le:driftgrowth} we have $K,t_o>0$ such that, for all $t>t_o$,
\[
\Drift_\mu(\tray{q}{t})\le K\Drift_\mu(\tray{q}{s^{top}_q(t)}).
\]
Since $\tray{q}{s^{top}_q(t)}$ is in the $\epsilon$-thick part of \teichmuller space we have, by \cref{pr:uniformboundthickpart},
\[
\Drift_\mu(\tray{q}{t})\le K\Drift_\mu(\tray{q}{s^{top}_q(t)}\le CK e^{s^{top}_q(t)}\le CK e^{\frac{1}{\xi} t}.
\]
Similarly, denoting $s^{bot}_q(t)$ the largest time smaller than $t$ such that $\tray{q}{s^{top}_q(t)}\in \T_{\epsilon(\xi)}(S)$ we get $s^{bot}_q(t)>\xi t$. Following the same reasoning we get 
\[
\Drift_\mu(\tray{q}{t})\ge \frac{1}{K} \Drift_\mu(\tray{q}{s^{bot}_q(t)}\ge \frac{c}{K} e^{s^{bot}_q(t)}\ge \frac{c}{K} e^{\xi t}.
\]
Since $\xi<\theta<1$ there is some $t_\theta$ such that $e^{\theta t}>\frac{c}{K} e^{\xi t}$ and $e^{\frac{1}{\theta} t}<CK e^{\frac{1}{\xi} t}$ for all $t\ge t_\theta$. Hence, the theorem follows from setting $t(q,\theta)=\max(t_q,t_\theta,t_o)$.
\end{proof}
It follows from Downdall--Duchin--Masur \cite[Proposition 5.5]{DowdallStatisticalHyp} that the hypothesis of \cref{th:exponentialgrowthprecise} is satisfied a wide variety of Lebesgue-class measures, giving us a proof of \cref{th:exponentialGrowthalmosteverywhere}. Furthermore, from Azemar--Gadre--Jeffreys \cite[Proposition 3.3]{AzemarStatisticalHyp} the property is also satisfied for harmonic measures generated by non-elementary measures on the mapping class group with finite first moment.

\begin{corollary}\label{co:limits}
Let $\sigma$ be a measure on $Q^1(o)$ satisfying the hypothesis of \cref{th:exponentialgrowthprecise}. Then, for any $\epsilon>0$ we have, for $\sigma$-almost every  $q\in Q^1(o)$,
\[
\liminf_{t\to \infty} \frac{\Drift_\mu(\tray{q}{t})}{e^{(1-\epsilon) t}}=\infty
\]
and
\[
\limsup_{t\to \infty} \frac{\Drift_\mu(\tray{q}{t})}{e^{(1+\epsilon) t}}=0.
\]
Furthermore, there is $\sigma$-almost surely some $0<c(q)<\infty$ such that
\[
\liminf_{t\to \infty} \frac{\Drift_\mu(\tray{q}{t})}{e^{t}}\le c(q) \le \limsup_{t\to\infty} \frac{\Drift_\mu(\tray{q}{t})}{e^t}.
\]
\end{corollary}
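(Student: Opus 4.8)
The plan is to obtain the first two statements as immediate consequences of \cref{th:exponentialgrowthprecise} with a well-chosen $\theta$, and to prove the third by going back to the sharp thick-part comparison in \cref{pr:uniformboundthickpart}, evaluated along a sequence of thick times supplied by the hypothesis.

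For the first limit, fix $\epsilon>0$; we may assume $0<\epsilon<1$ (the case $\epsilon\ge1$ being trivial: the drift is proper, so $\Drift_\mu(\tray{q}{t})\to\infty$, while $e^{(1-\epsilon)t}\le1$). Apply \cref{th:exponentialgrowthprecise} with $\theta=1-\tfrac{\epsilon}{2}\in(0,1)$: for $\sigma$-almost every $q$ there is $t(q)<\infty$ with $\Drift_\mu(\tray{q}{t})>e^{(1-\epsilon/2)t}$ for $t>t(q)$, so
\[
\frac{\Drift_\mu(\tray{q}{t})}{e^{(1-\epsilon)t}}>e^{(\epsilon/2)t},
\]
which tends to $\infty$, forcing the liminf to be $\infty$. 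For the second limit, fix $\epsilon>0$ and apply \cref{th:exponentialgrowthprecise} with any $\theta\in(0,1)$ close enough to $1$ that $\tfrac1\theta<1+\epsilon$ (for instance $\theta=(1+\tfrac\epsilon2)^{-1}$): for $\sigma$-almost every $q$ one then has $\Drift_\mu(\tray{q}{t})<e^{t/\theta}$ eventually, hence $\Drift_\mu(\tray{q}{t})/e^{(1+\epsilon)t}<e^{(1/\theta-1-\epsilon)t}\to0$, so the limsup vanishes. The exceptional null set is permitted to depend on $\epsilon$, so this suffices as stated; a single null set is obtained, if desired, by intersecting over $\epsilon=1/n$.

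For the third statement I would not appeal to \cref{th:exponentialgrowthprecise} at all but to the thickness hypothesis directly. Fix $\xi=\tfrac12$ and let $\epsilon'>0$ be the associated constant $\epsilon(\xi)$ from the hypothesis, shrunk if necessary so that the fixed basepoint $o$ lies in $\T_{\epsilon'}(S)$ — this is possible because $o$ is a single point of $\T(S)$, and shrinking $\epsilon'$ only enlarges the thick part, so the hypothesis persists. For $\sigma$-almost every $q$ there is $t_q<\infty$ with $\thick_{\epsilon'}^{\%}(\tray{q}{\cdot},t)\ge\tfrac12$ for all $t\ge t_q$; in particular $\{s\ge0:\tray{q}{s}\in\T_{\epsilon'}(S)\}$ has infinite Lebesgue measure and hence is unbounded, so I can pick an increasing sequence $t_n\to\infty$ with $\tray{q}{t_n}\in\T_{\epsilon'}(S)$. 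Applying \cref{pr:uniformboundthickpart} with basepoint $o$ and using $d_{\teich}(\tray{q}{t_n},o)=t_n$ (since $q\in Q^1(o)$), I get constants $0<c\le C<\infty$, independent of $q$ and $n$, with
\[
c\,e^{t_n}\le\Drift_\mu(\tray{q}{t_n})\le C\,e^{t_n},\qquad\text{i.e.}\qquad \frac{\Drift_\mu(\tray{q}{t_n})}{e^{t_n}}\in[c,C].
\]
Passing to a subsequence on which this bounded quantity converges, its limit $c(q)$ lies in $[c,C]\subset(0,\infty)$ and is a subsequential limit of $\Drift_\mu(\tray{q}{t})/e^{t}$ as $t\to\infty$; therefore $\liminf_{t\to\infty}\Drift_\mu(\tray{q}{t})/e^{t}\le c(q)\le\limsup_{t\to\infty}\Drift_\mu(\tray{q}{t})/e^{t}$, which is the claim.

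The step I expect to be the main obstacle is precisely the reason the third statement does not follow formally from the first two: the bounds $e^{\theta t}<\Drift_\mu(\tray{q}{t})<e^{t/\theta}$ of \cref{th:exponentialgrowthprecise} have ratios $e^{(\theta-1)t}\to0$ and $e^{(1/\theta-1)t}\to\infty$ against $e^{t}$, for every $\theta<1$, so they never pin down $\Drift_\mu(\tray{q}{t})/e^{t}$. Producing a finite positive cluster point therefore genuinely requires the sharp comparison of \cref{pr:uniformboundthickpart} evaluated \emph{at} thick times $t_n$, where $d_{\teich}(\tray{q}{t_n},o)=t_n$ with no error term; everything else (unboundedness of the set of thick times from positive density, and arranging $o$ to sit in the relevant thick part) is routine.
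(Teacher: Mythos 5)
Your proof is correct and follows essentially the same route as the paper: the first two limits are obtained from \cref{th:exponentialgrowthprecise} applied with a suitable $\theta$, and the third by evaluating \cref{pr:uniformboundthickpart} along an unbounded sequence of times at which the ray is in the thick part, as guaranteed by the density hypothesis. In fact your choices $\theta=1-\tfrac{\epsilon}{2}$ and $\theta=(1+\tfrac{\epsilon}{2})^{-1}$ are the ones actually needed: the paper's literal choices $\theta=1-\epsilon$ and $\theta=(1+\epsilon)^{-1}$ only give $\liminf\ge 1$ and $\limsup\le 1$ rather than $\infty$ and $0$, so your version quietly repairs a small slip, and your care in shrinking $\epsilon(\xi)$ so that $o$ lies in the relevant thick part is a detail the paper leaves implicit.
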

\begin{proof}
Let $\theta=1-\epsilon$. Then, by \cref{th:exponentialGrowthalmosteverywhere} there is some $c>0$ such that $\sigma$-almost every $q\in Q^1(o)$ we have
\[
\Drift_\mu(\tray{q}{t})\ge e^{(1-\epsilon) t}
\]
for all $t$ big enough. Hence, the first relation follows. Similarly for $\theta=\frac{1}{1+\epsilon}$ we get the second relation. The last relation follows from taking a diverging sequence of times $(t_n)$ such that $\tray{q}{t_n}\in \T_\epsilon(S)$ and applying \cref{pr:uniformboundthickpart}.
\end{proof}

\section{Variable growth}
In this section we prove that there is some geodesic along which we have variable growth. The basic idea of the proof is alternating \cref{pr:uniformboundthickpart} and \cref{th:linearalongJenkin}, using the fact that both results apply to dense sets of directions.
 \begin{proof}[Proof of \cref{th:undefinedGrowth}]
Fix $\delta,\epsilon>0$ such that the basepoint $o\in \T(S)$ is in the $\epsilon$-thick part of \teichmuller space.
We shall build inductively a sequence of quadratic differentials $(j_k)$, as well as sequences of times $(s_k)$, $(t_{k})$ such that, $\Drift_\mu(\tray{j_k}{s_i})<\left(f(s_i)-\delta2^{-2(k-i)}\right) s_i$ for each $i\le k$ and each $\tray{j_k}{t_i}$ is at most at distance $\delta\left(2-2^{-2(k-i)}\right)$ from $\T_\epsilon$ for each $i\le k$. The theorem will follow by taking an accumulation point of such sequence.

For $k=0$ let $j_0\in Q^1(o)$ be a Jenkins-Strebel quadratic differential. By \cref{th:linearalongJenkin} there are constants $C(j_0),t(j_0)$ such that $\Drift_\mu(\tray{j_0}{t}) < C(j_0) t$ for all $t>t(j_0)$. Let $s_0$ be big enough such that $f(s_0)> C(j_0)+\delta$ and $s_0>t(j_0)$. Finally, let $t_0=0$.

Assume then we have the sequence up to $k$. The set of recurrent directions to $\T_\epsilon$ is dense, so we can take a sequence of quadratic differentials $(q^n)\subset Q^1(o)$ spawning recurrent geodesics and converging to $j_k$. Since $q^n\to j_k$, the geodesics $\tray{q^n}{\cdot}$ converge to the geodesic $\tray{j_k}{\cdot}$ pointwise. The drift is a continuous function with respect to \teichmuller space, so $\Drift_\mu(\tray{q^n}{s_i})\to \Drift_\mu(\tray{j_k}{s_i})\le(f(s_i)-\delta2^{-2(k-i)}) s_i$ for each $i\le k$. Let $q$ be the first element of the sequence $(q^n)$ such that 
\[\Drift_\mu(\tray{q}{s_i})< (f(s_i)-\delta2^{-(2(k-i)+1)}) s_i\]
and 
\[d_{\teich}(\tray{q}{t_i},\tray{j_k}{t_i})<\delta 2^{-(2(k-i)+1)}\]
for all $i\le k$. The geodesic $\tray{q}{\cdot}$ is recurrent, so we can fix $t_{k+1}>s_k+1$ such that $\tray{q}{t_{k+1}}\in \T_\epsilon$.

The set of Jenkins--Strebel directions is dense, so we can take a sequence $(j^n)$ converging to $q$. As before, the convergence within the sequences is pointwise, so $\Drift_\mu(\tray{j^n}{s_i})\to \Drift_\mu(\tray{q}{s_i})\le(f(s_i)-\delta2^{-(2(k-i)+1)}) s_i$ for each $i\le k$. Let $j_{k+1}$ be the first element of the sequence $(j^n)$ such that 
\[\Drift_\mu(\tray{j_{k+1}}{s_i})\le(f(s_i)-\delta2^{-2(k+1-i)}) s_i\]
and 
\[d_{\teich}(\tray{j_{k+1}}{t_i},\tray{q}{t_i})<\delta2^{-2(k+1-i)}\]
for all $i\le k+1$. As before, there is some $C(j_{k+1})$ such that $\Drift_\mu(\tray{j_{k+1}}{t}) < C(j_{k+1}) t$, so let $s_{k+1}$ be the first time larger than $t_{k+1}$ such that $f(s_{k+1})>C(s_{k+1})+\delta$. Furthermore, for $i\le k$
\[
d_{\teich}(\tray{j_{k+1}}{t_i},\T_\epsilon)\le d_{\teich}(\tray{j_{k+1}}{t_i},\tray{q}{t_i})+d_{\teich}(\tray{q}{t_i},\tray{j_k}{t_i})+d_{\teich}(\tray{j_k}{t_i},\T_\epsilon)
\]
\[<\delta \left(2^{-(2(k-i)+1)}+ 2^{-2(k+1-i)}+2-2^{-(2(k-i))}\right)=\delta\left(2-2^{-2(k+1-i)}\right)
\]
and for $i=k+1$ we have, since $\tray{q_{k+1}}{t_{k+1}}\in \T_\epsilon$,
\[d_{\teich}(\tray{j_{k+1}}{t_{k+1}},\T_\epsilon) \le d_{\teich}(\tray{j_{k+1}}{t_{k+1}},\tray{q_{k+1}}{t_{k+1}})< \delta.
\]
Hence, we have completed the induction step.

Let $q_f$ be an accumulation point of the sequence $(j_k)$. There is then a subsequence, relabeled $(j_{k})$ converging to $q_f$. By pointwise convergence, $\Drift_\mu(\tray{j_k}{s_i})\to \Drift_\mu(\tray{q_f}{s_i}$, so $\Drift_\mu(\tray{q_f}{s_i}< f(s_i) s_i$ for each $i$. Furthermore, $\tray{j_k}{t_i}\to \tray{q_f}{t_i}$, so for each $i$ the points $\tray{q_f}{t_i}$ are at most at distance $2\delta$ from the $\T_\epsilon$. Therefore, there is some $\epsilon'$ such that  $\tray{q_f}{t_i}\in T_{\epsilon'}$ for all $i$. Hence, by \cref{pr:uniformboundthickpart} there is some $c>0$ such that $\Drift_\mu(\tray{q_f}{t_i})> c e^{t_i}$. Furthermore, $t_{k+1}> s_k+1> t_{k}+1$, so the sequence $(t_k)$ diverges to infinity, and so does $(s_k)$. Finally, the geodesic $\tray{q_f}{\cdot}$ is recurrent, so by Masur's criterion \cite[Theorem 1.1]{MasurRecurrent} the vertical foliation of $q_f$ is uniquely ergodic.
 \end{proof}

\section{Conclusions}

\cref{th:exponentialgrowthprecise} shows that along a typical geodesic the drift grows exponentially. However, it does not determine precisely the fluctuations within such exponential growth. Furthermore, \cref{th:undefinedGrowth} shows that the behaviour of the drift along a geodesic can vary wildly, so it is natural to ask whether there is some variation within a typical geodesic.
\begin{question}
Let $\nu$ be a measure on $Q^1(o)$ satisfying the hypothesis of \cref{th:exponentialgrowthprecise}. Do we have
\[
0<\liminf_{t\to \infty} \frac{\Drift_\mu(\tray{q}{t})}{e^t}=\limsup_{t\to\infty} \frac{\Drift_\mu(\tray{q}{t})}{e^t}<\infty
\]
$\nu$-almost surely?
\end{question}
Note that by \cref{co:limits} we have $\limsup_{t\to\infty} \frac{\Drift_\mu(\tray{q}{t})}{e^t}>0$ and $\liminf_{t\to \infty} \frac{\Drift_\mu(\tray{q}{t})}{e^t}<\infty$. In the proof of \cref{th:exponentialgrowthprecise} we have bounded the quotient between the drift and the exponential $e^t$ by a function depending on the maximal continuous time spent in the thin part of \teichmuller space up to time $t$. The growth of these maximal departures may vary differently depending on the measure. On the one hand, following work of Gadre \cite[Lemma 5.5]{VaibhavExcursions} it is reasonable to conjecture that the maximal departure grows slightly faster than $\log(t)$ for the Lebesgue measure. On the other hand, because of exponential decay of subsurface projections for harmonic measures, it may be expected (though this is still unproved) that the largest continuous time spent in the thin part for a harmonic typical \teichmuller geodesic is of the order of $\log( \log( t))$. Therefore, the answer to the previous question might be different for the Lebesgue and harmonic measures.

\bibliographystyle{alpha}
\bibliography{bibliography.bib}{}

\newcommand{\etalchar}[1]{$^{#1}$}
\begin{thebibliography}{AGG{\etalchar{+}}nt}

\bibitem[AGG{\etalchar{+}}nt]{DriftProper}
Aitor Azemar, Vaibhav Gadre, Sébastien Gouëzel, Thomas Haettel, Pablo Lessa,
  and Caglar Uyanik.
\newblock Random walk speed is a proper function on {T}eichmüller space, 2022,
  preprint.

\bibitem[AGJ22]{AzemarStatisticalHyp}
Aitor Azemar, Vaibhav Gadre, and Luke Jeffreys.
\newblock Statistical hyperbolicity for harmonic measure.
\newblock {\em Int. Math. Res. Not. IMRN}, (8):6289--6309, 2022.

\bibitem[CR07]{ChoiRafiComparison}
Young-Eun Choi and Kasra Rafi.
\newblock Comparison between {T}eichm\"{u}ller and {L}ipschitz metrics.
\newblock {\em J. Lond. Math. Soc. (2)}, 76(3):739--756, 2007.

\bibitem[DDM14]{DowdallStatisticalHyp}
Spencer Dowdall, Moon Duchin, and Howard Masur.
\newblock Statistical hyperbolicity in {T}eichm\"{u}ller space.
\newblock {\em Geom. Funct. Anal.}, 24(3):748--795, 2014.

\bibitem[DKN09]{dkn2009}
Bertrand Deroin, Victor Kleptsyn, and Andr\'{e}s Navas.
\newblock On the question of ergodicity for minimal group actions on the
  circle.
\newblock {\em Mosc. Math. J.}, 9(2):263--303, back matter, 2009.

\bibitem[FM12]{primer}
Benson Farb and Dan Margalit.
\newblock {\em A primer on mapping class groups}, volume~49 of {\em Princeton
  Mathematical Series}.
\newblock Princeton University Press, Princeton, NJ, 2012.

\bibitem[Gad17]{VaibhavExcursions}
Vaibhav Gadre.
\newblock Partial sums of excursions along random geodesics and volume
  asymptotics for thin parts of moduli spaces of quadratic differentials.
\newblock {\em J. Eur. Math. Soc. (JEMS)}, 19(10):3053--3089, 2017.

\bibitem[GL00]{GardinerBook}
Frederick~P. Gardiner and Nikola Lakic.
\newblock {\em Quasiconformal {T}eichm\"{u}ller theory}, volume~76 of {\em
  Mathematical Surveys and Monographs}.
\newblock American Mathematical Society, Providence, RI, 2000.

\bibitem[KL11]{kl2011}
Anders Karlsson and Fran\c{c}ois Ledrappier.
\newblock Noncommutative ergodic theorems.
\newblock In {\em Geometry, rigidity, and group actions}, Chicago Lectures in
  Math., pages 396--418. Univ. Chicago Press, Chicago, IL, 2011.

\bibitem[KM96]{KaiMasur}
Vadim~A. Kaimanovich and Howard Masur.
\newblock The {P}oisson boundary of the mapping class group.
\newblock {\em Invent. Math.}, 125(2):221--264, 1996.

\bibitem[KT22]{tk2020}
Petr Kosenko and Giulio Tiozzo.
\newblock The fundamental inequality for cocompact {F}uchsian groups.
\newblock {\em Forum Math. Sigma}, 10:e102, 2022.

\bibitem[LR11]{LenzhenKasraConvexity}
Anna Lenzhen and Kasra Rafi.
\newblock Length of a curve is quasi-convex along a {T}eichm\"{u}ller geodesic.
\newblock {\em J. Differential Geom.}, 88(2):267--295, 2011.

\bibitem[Mas82]{MasurTwo}
Howard Masur.
\newblock Two boundaries of {T}eichm\"{u}ller space.
\newblock {\em Duke Math. J.}, 49(1):183--190, 1982.

\bibitem[Mas92]{MasurRecurrent}
Howard Masur.
\newblock Hausdorff dimension of the set of nonergodic foliations of a
  quadratic differential.
\newblock {\em Duke Math. J.}, 66(3):387--442, 1992.

\bibitem[Min93]{Minsky}
Yair~N. Minsky.
\newblock Teichm\"{u}ller geodesics and ends of hyperbolic {$3$}-manifolds.
\newblock {\em Topology}, 32(3):625--647, 1993.

\bibitem[Tan19]{t2019}
Ryokichi Tanaka.
\newblock Dimension of harmonic measures in hyperbolic spaces.
\newblock {\em Ergodic Theory Dynam. Systems}, 39(2):474--499, 2019.

\bibitem[Wol79]{WolpertLemma}
Scott Wolpert.
\newblock The length spectra as moduli for compact {R}iemann surfaces.
\newblock {\em Ann. of Math. (2)}, 109(2):323--351, 1979.

\end{thebibliography}

%\printbibliography

\end{document}